\newtheorem{dfn}{Definition}[section]
\newtheorem{thm}{Theorem}[section]
\newtheorem{lem}[thm]{Lemma}
\newtheorem {pro}[thm]{Proposition}
\newtheorem{rmk}[thm]{Remark}
\title{Cross-connections and variants of the full transformation semigroup}
\author{P. A. Azeef Muhammed}
\address{Institute of Natural Sciences and Mathematics,
Ural Federal University,
Lenina 51,
620000 Ekaterinburg, Russia.}
\email{azeefp@gmail.com, a.a.parail@urfu.ru}
\keywords{Regular semigroup, full transformation semigroup, cross-connections, normal category, variant}
\subjclass[2010]{20M10, 20M17, 20M50}
\thanks{The author acknowledges the financial support by the Competitiveness Enhancement Program of the Ural Federal University, Russia.}
\begin{document}
\maketitle
\begin{abstract}
Cross-connection theory propounded by K. S. S. Nambooripad describes the ideal structure of a regular semigroup using the categories of principal left (right) ideals. A variant $\mathscr{T}_X^\theta$ of the full transformation semigroup $(\mathscr{T}_X,\cdot)$ for an arbitrary $\theta \in \mathscr{T}_X$ is the semigroup $\mathscr{T}_X^\theta= (\mathscr{T}_X,\ast)$ with the binary operation $\alpha \ast \beta = \alpha\cdot\theta\cdot\beta$ where $\alpha, \beta \in \mathscr{T}_X$. In this article, we describe the ideal structure of the regular part $Reg(\mathscr{T}_X^\theta)$ of the variant of the full transformation semigroup using cross-connections. We characterize the constituent categories of $Reg(\mathscr{T}_X^\theta)$ and describe how they are \emph{cross-connected} by a functor induced by the sandwich transformation $\theta$. This lead us to a structure theorem for the semigroup and give the representation of $Reg(\mathscr{T}_X^\theta)$ as a cross-connection semigroup. Using this, we give a description of the biordered set and the sandwich sets of the semigroup.
\end{abstract}
\section{Introduction}\label{secintro}
In 1973, T. E. Hall \cite{hall} used the principal ideals and translations of a regular semigroup to study its ideal structure. In 1974, P. A. Grillet \cite{gril,gril1,gril2} inspired by Hall's ideas, axiomatically characterized the partially ordered sets of principal ideals of a regular semigroup. He explicitly described the relationship between the principal ideals of a regular semigroup using a pair of maps which he called a \emph{cross-connection} and gave a fundamental representation of a regular semigroup as a cross-connection semigroup. Later in 1994, K. S. S. Nambooripad \cite{cross} extended Grillet's construction to arbitrary regular semigroups (not only fundamental ones), by replacing partially ordered sets with \emph{normal categories}. Given an arbitrary regular semigroup, it induces a cross-connection between the categories of principal left and right ideals, and conversely a cross-connection between suitable normal categories gives rise to a regular semigroup. 

Cross-connection theory describes how two categories (one each from the Green $\mathscr{L}$ and $\mathscr{R}$ relations) are {connected} to form a regular semigroup. Hence, it is very suitable to study the structure of semigroups which have a rather complicated ideal structure. But the theory in itself is quite abstract that it may not look attractive for people working in concrete semigroup problems. This article is a humble effort to fill that void, wherein we give concrete meanings to the abstract notions of the cross-connection theory, by placing them in the setting of the transformation semigroup. 

It may be noted here that the cross-connection structure degenerates in many situations like regular monoids (see Section \ref{seccxn} for details). Hence, it is not easy to find a concrete setting which demonstrates the nuances and subtleties of the sophisticated construction. Fortunately, the regular part of the {variant} of the full transformation semigroup provides such a concrete setting which is very amenable for cross-connection analysis. Here we shall see that the cross-connections are determined by the {variant} element $\theta$ and hence the pun in the title of the article is vindicated.

Recall, that the full transformation semigroup $\mathscr{T}_X$ is the semigroup of all mappings from a set $X$ to itself. It is a well known that $\mathscr{T}_X$ is regular and every semigroup can be realised as a transformation semigroup \cite{clif}. Hence the semigroup $\mathscr{T}_X$ and its subsemigroups have been studied extensively \cite{cfts,howigs,igd,mstx}. The cross-connections of the \emph{singular} transformation semigroup $Sing(X)$ was studied recently in a joint article of the author with A. R. Rajan \cite{tx}. The categories involved were characterized as the powerset category $\mathscr{P}(X)$ of all non-empty proper subsets of $X$ and the category $\Pi(X)$ of all non-identity partitions of $X$. It was shown that \emph{every} cross-connection semigroup that arises from $\mathscr{P}(X)$ and $\Pi(X)$ is isomorphic to $Sing(X)$. It can be easily shown that the similar results hold for the full transformation semigroup $\mathscr{T}_X$ as well (see Section \ref{secvar} below). These results also push us closer to the variants of $\mathscr{T}_X$.

Let $(S,\cdot)$ be an arbitrary semigroup. Then the variant $S^\theta$ for an arbitrary $\theta \in S$ is defined as the semigroup $(S,\ast)$ with the binary composition $\ast$ as follows. 
$$ \alpha \ast \beta = \alpha \cdot \theta \cdot \beta \quad \text{ for } \alpha, \beta \in S.$$
Variant of a semigroup was initially studied by K. D. Magill \cite{magill} and J. B. Hickey \cite{hickey}; later by T. A. Khan and M. V. Lawson \cite{khan}, G. Y. Tsyaputa \cite{tsya}, Y. Kemprasit \cite{kempra}, I. Dolinka and J. East \cite{igd} among others. See \cite{igd} for a detailed discussion on the development of the literature. As noted in \cite{khan}, the variants arise naturally in the context of Rees matrix semigroups. It is worth observing here that the cross-connection in a regular Rees matrix semigroup $\mathscr{M}[G;I,\Lambda;P]$ is completely determined by the sandwich matrix $P$ \cite{css}. 

It is known that even if a semigroup $S$ is regular, its variant $S^\theta$ need not be regular. But when $S$ is a regular semigroup, the regular elements of $S^\theta$ form a subsemigroup \cite{khan}. In particular, $Reg(\mathscr{T}_X^\theta)$ forms a subsemigroup and it was studied in detail recently by I. Dolinka and J. East \cite{igd}. They described the structure of the regular part $Reg(\mathscr{T}_X^\theta)$ using the map $a \mapsto (a\theta,\theta a)$ starting from the right and left translations. We shall also give a similar structural description but using the much more general theory of cross-connections. This in turn suggests that their results obtained in the specific case of $Reg(\mathscr{T}_X^\theta)$ are much more universal in nature. So, our discussion may also shed some light on the structural aspects of a more general class of semigroups. 

The structure of the article is as follows. In Section \ref{seccxn}, we briefly outline the cross-connection theory by describing how the categories of principal left (right) ideals of a regular semigroup are cross-connected. Using that we describe the representation of a semigroup as a cross-connection semigroup. A reader more interested in the concrete case of $Reg(\mathscr{T}_X^\theta)$ may skip this section as the discussion in sequel is more or less self contained. In Section \ref{secvar}, we characterize the categories involved in the construction of $Reg(\mathscr{T}_X^\theta)$ as $\mathscr{P}_\theta$ and $\Pi_\theta$. These are full subcategories of $\mathscr{P}(X)$ and $\Pi(X)$ respectively and we describe the intermediary semigroups that arises from these categories. We also characterize the \emph{normal duals} $N^*\mathscr{P}_\theta$ and $N^*\Pi_\theta$ in this section. In Section \ref{secvarcxn}, we describe how a cross-connection is induced by the sandwich element $\theta$ and characterize the cross-connection bifunctors. We describe the duality between the cross-connections using the natural isomorphism between the bifunctors. This explains how the categories are cross-connected to form $Reg(\mathscr{T}_X^\theta)$. In the next section, we use our cross-connection representation to give a description of the biorder structure of $\mathscr{T}_X$ and $Reg(\mathscr{T}_X^\theta)$, in terms of subsets and partitions. We conclude with an illustrative example.

\section{Theory of cross-connections}\label{seccxn}
In this section, we briefly describe the theory of cross-connections by starting from a regular semigroup, and reconstructing it as a cross-connection semigroup from its constituent normal categories of principal left (right) ideals. We include this rather lengthy preliminary section to exposit the cross-connection structure of a regular semigroup, considering the lack of easy availability of Nambooripad's original work \cite{cross}, and the formal presentation style in \cite{cross}, which makes it quite cryptic for a fresh reader.

The reverse process of constructing a regular semigroup from abstract normal categories is much more involved. An interested reader may refer \cite{tx} for an introductory description, \cite{kvn} for a more concise but less detailed discussion or \cite{cross} for the complete construction. See \cite{azeefcross} for some references on the theory of cross-connections.

We assume some basic notions from semigroup theory \cite{clif,grillet} and category theory \cite{bucur,mac}. In this article, all the functions are written in the order of their composition, i.e., from left to right. For a category $\mathcal C$, the set of objects of $\mathcal C$ is denoted by $v\mathcal C$ and the set of morphisms by $\mathcal{C}$ itself. Thus the set of all morphisms between objects $c,d \in v\mathcal{C}$ is denoted by $\mathcal{C}(c,d)$.

A \emph{normal category} is a specialised category whose object set is a partially ordered set and the morphisms admit suitable factorizations. In fact, a normal category was axiomatised so that the principal ideals of a regular semigroup with partial translations as morphisms formed a normal category, and conversely every normal category arose this way.  

Let $S$ be a regular semigroup. Then there are two normal categories associated with it: the principal left ideal category $\mathcal{L}$ and the principal right ideal category $\mathcal{R}$. An object of the category $\mathcal{L}$ is a principal left ideal $Se$ for $e\in E(S)$, and a morphism from $Se$ to $Sf$ is a partial right translation $\rho(e,u,f): u \in eSf$. That is, for $x\in Se$, the morphism $\rho(e,u,f):x \mapsto xu \in Sf$. Dually, the objects of the category $\mathcal{R}$ are the principal right ideals $eS$, and the morphisms are partial left translations $\lambda(e,w,f) : w \in fSe$. 

We mention in passing that the principal left ideal category $\mathcal{L}$ is category isomorphic to the \emph{Karoubi envelope} of a semigroup introduced by B. Tilson in connection with the Delay Theorem \cite[Section 17]{tilson}, and later studied by A. Costa and B. Steinberg in relation to the Sch{\"u}tzenberger category of a semigroup \cite{costa}.

Observe that we can have a partial order on the object set of $\mathcal{L}$, namely the order induced by set inclusions. So, if $Se \subseteq Sf$, we can see that we have an \emph{inclusion} morphism $\rho(e,e,f)= j(Se,Sf)$ from $Se$ to $Sf$. The objects of the category $\mathcal{L}$ along with inclusion morphisms form a strict-preorder subcategory $\mathcal{P}$ of the category $\mathcal{L}$. This is the category of inclusions in $\mathscr{L}$ (also see  \cite{rajancat}). 

Given an inclusion $\rho(e,e,f)$, we have a morphism $\rho(f,fe,e):Sf \to Se$ such that $\rho(e,e,f) \rho(f,fe,e) = \rho(e,efe,e) = \rho(e,e,e) = 1_{Se}$. Therefore, we say that the inclusion $\rho(e,e,f)$ {splits}, and its right inverse $\rho(f,fe,e)$ shall be called a \emph{retraction}. A morphism $\rho(e,u,f)$ will be an \emph{isomorphism} if it has both a right inverse and a left inverse, which happens when $e \mathscr{D}f$.  

Given a morphism $\rho = \rho(e,u,f)$ in $\mathcal{L}$, for any $g\in E(R_{u})$ such that $eg=ge=g$ and $h\in E(L_{u})$, we can factorize 
$$ \rho(e,u,f) = \rho(e,g,g)\rho(g,u,h)\rho(h,h,f), $$
where $\rho(e,g,g)$ is a retraction, $\rho(g,u,h)$ is an isomorphism and $\rho(h,h,f)$ is an inclusion. Such a factorization is called a \emph{normal factorization} of $\rho(e,u,f)$ in $\mathcal{L}$. The morphism $\rho(e,gu,h): Se\to Sh$ is known as the {epimorphic component} $\rho^\circ$ of the morphism $\rho$. The codomain $Sh$ of $\rho^\circ$ is called the {image} of the morphism $\rho$ denoted by Im $\rho$.
The following diagram illustrates the factorization property of a morphism in $\mathcal{L}$.
\begin{equation*}\label{normalfactls}
\xymatrixcolsep{6pc}\xymatrixrowsep{6pc}\xymatrix
{
 Se \ar[r]^{\rho} \ar@<-3pt>[d]_{\rho(e,g,g)}   & Sf  \\       
 Sg \ar@{.>}@<-3pt>[u]_{j(Sg,Se)} \ar@<-3pt>[r]_{\rho(g,u,h)} & Sh\ar@{.>}@<-3pt>[l]_{\rho(h,u,g)}\ar[u]_{\rho(h,h,f)= j(Sh,Sf)} 
}
\end{equation*}

Now we proceed to describe {normal cones} in the category $\mathcal{L}$. These are the basic building blocks of our construction since the cross-connection semigroup we obtain eventually will consist of ordered pairs of normal cones. A normal cone is essentially a collection of {`nice'} morphisms with a distinguished vertex in $\mathcal{L}$. 
\begin{dfn}
A \emph{normal cone} $\gamma$ with a vertex $Sd$ is a function from the object set $v\mathcal{L}$ to the set of morphisms in $\mathcal{L}$ such that  
\begin{enumerate}
\item $\gamma(Se) \in \mathcal{L}(Se,Sd)$ for all $Se \in v\mathcal{L}$;  
\item whenever $Se\subseteq Sf$ then $j(Se,Sf)\gamma(Sf) = \gamma(Se)$;
\item there exists $Sh\in v\mathcal{L}$ such that $\gamma(Sh)\colon Sh\to Sd$ is an isomorphism.
\end{enumerate}
\end{dfn}
The following diagram illustrates a normal cone in the category $\mathcal{L}$.
\begin{equation*}\label{conediag}
\xymatrixrowsep{8pc}\xymatrixcolsep{2pc}\xymatrix
{
&&Sd\\
Se\ar[r]^\subseteq_{\rho(e,e,f)} \ar[rru]|-{\gamma(Se)} &
Sf \ar[ru]|-{\gamma(Sf)} &
Sd \ar@{--}[l] \ar[u]|-{\gamma(Sd)} &
Sh \ar@{--}[l] \ar[lu]|-{\gamma(Sh)} &
Sg \ar@{--}[l] \ar[llu]|-{\gamma(Sg)}
}
\end{equation*}
The morphism $\gamma(Se)\colon  Se \to Sd$ is called the {component} of $\gamma$ at $Se$. The {M-set} of a cone $\gamma$ in $\mathcal{L}$ with vertex $Sd$ is defined as 
$$ M\gamma = \{ Se \in \mathcal{L}: e \mathscr{D} d \}. $$ 
Observe that, given a normal cone $\gamma$ with vertex $Sd$ and a morphism $\rho:Sd \to Sh$ with Im $\rho = Sg \subseteq Sh$, the map $\gamma*\rho^\circ \colon  Se \mapsto\gamma(Se)\rho^\circ$ from $v\mathcal{L}$ to $\mathcal{L}$ is a normal cone in the category $\mathcal{L}$ with vertex $Sg$. Hence, given two normal cones $\gamma$ and $\sigma$ in $\mathcal{L}$ with vertices $Sd$ and $Sh$ respectively, we can compose them as follows.
\begin{equation} \label{eqnsg1}
\gamma \cdot \sigma = \gamma*(\sigma(Sd))^\circ 
\end{equation} 
where $(\sigma(Sd))^\circ$ is the epimorphic component of the morphism $\sigma(Sd)$. If the morphism $\sigma(Sd) :Sd\to Sh$ has image $Sg$, then the vertex of the new cone $\gamma\cdot\sigma$ will be $Sg$, as illustrated in the diagram below. 
\begin{equation*} \label{diagsg2}
\xymatrixrowsep{1pc}\xymatrixcolsep{.5pc}\xymatrix
{&&&&\gamma &&&&&\cdot &&&&& \sigma &&&&&&&= &&\gamma*(\sigma(d))^\circ }
\end{equation*} \vspace{-.5cm}
\begin{equation*}\label{conecompose2}
\xymatrixrowsep{3pc}\xymatrixcolsep{.5pc}\xymatrix
{
&&Sd && && && Sh && && && Sh\\
&&&&&  \cdot &&&&&&& = && Sg\ar@{.>}[u]|-{j(Sg,Sh)}\\
Se\ar[r]_{j_1} \ar[rruu]|-{\gamma(Se)} &
Sf \ar[ruu] &
Sd \ar@{--}[l] \ar[uu]|-{\gamma(Sd)} &
Sh \ar@{--}[l] \ar[luu] &
Sg \ar[l]^{j_2} \ar[lluu]|-{\gamma(Sg)} &&
Se \ar[r]_{j_1} \ar[rruu]|-{\sigma(Se)}&
Sf \ar[ruu] & 
Sd \ar@{--}[l] \ar[uu]|-{\sigma(Sd)} &
Sh \ar@{--}[l] \ar[luu] &
Sg \ar[l]^{j_2} \ar[lluu]|-{\sigma(Sg)}&& 
Se\ar@{.>}[r] \ar@{.>}[rruu]|-{\sigma(Se)} &
Sf \ar@{.>}[ruu] \ar@{.}[r]&
Sd\ar[u]|-{(\sigma(Sd))^\circ}&
Sh\ar@{.}[l] \ar@{.>}[luu] &
Sg \ar@{.>}[l] \ar@{.>}[lluu]|-{\sigma(Sg)}\\
\\
&&&&&&&&&&&& Se\ar[r]_{j_1} \ar[rruu]|-{\gamma(Se)} &
Sf \ar[ruu] &
Sd \ar@{--}[l] \ar[uu]|-{\gamma(Sd)} &
Sh \ar@{--}[l] \ar[luu] &
Sg \ar[l]^{j_2} \ar[lluu]|-{\gamma(Sg)} &&
}
\end{equation*}
All the {normal cones} in the normal category $\mathcal{L}$ with this special binary composition form a regular semigroup $T\mathcal L$ known as the \emph{semigroup of normal cones} in $\mathcal {L}$. It can be shown that the $\mathcal{L}$ category associated with the regular semigroup $T\mathcal{L}$ is isomorphic to $\mathcal{L}$.

Now we describe some important normal cones in $\mathcal{L}$. These distinguished cones $\rho^a$ with vertex $Sa$ called \emph{principal cones}, are those induced by an element $a \in S$. The component of the cone $\rho^a$ at any $Se \in v\mathcal{L}$ is given by $\rho^a(Se) = \rho(e,ea,f) $, where $f \in E(L_a) $.
The mapping $a \mapsto \rho^a$ is a homomorphism from $S$ to $T\mathcal{L}$. Further if $S$ is a regular monoid, then $S$ is isomorphic to $T\mathcal{L}$. This is a crucial fact which shall be elaborated later in this section.

Further, Nambooripad invented a notion of \emph{normal dual} of a normal category extending Grillet's idea of the dual of a partially ordered set. It is well known that the category of all functors from a category $\mathcal{L}$ to the category $\mathbf{Set}$ with natural transformations as morphisms forms a dual category $\mathcal{L}^*$. Given the normal category $\mathcal{L}$, Nambooripad identified a {full} subcategory of $\mathcal{L}^*$ as the normal dual $N^*\mathcal{L}$ of $\mathcal{L}$.

Instead of considering all functors from $\mathcal{L}$ to $\mathbf{Set}$, Nambooripad restricted the object set of $N^*\mathcal{L}$ to certain special functors called \emph{H-functors}. 
For each $\gamma \in T\mathcal{L}$ with vertex $Sd$, the \emph{H-functor} $H({\gamma};-)\colon  \mathcal{C}\to \mathbf{Set}$ is defined as follows. For each $Se\in v\mathcal{L}$ and for each $\rho \in \mathcal{L}(Se,Sf)$, let
\begin{subequations} \label{eqnH11}
\begin{align}
H({\gamma};{Se})&= \{\gamma\ast (\varrho)^\circ :   \varrho\in \mathcal{L}(Sd,Se)\} \text{ and }\\
H({\gamma};{\rho}) :H({\gamma};{Se}) &\to H({\gamma};{Sf}) \text{ given by }\gamma\ast (\varrho)^\circ \mapsto \gamma\ast (\varrho\rho)^\circ.
\end{align}
\end{subequations}
It can be shown that the $H$-functor is a \emph{representable functor} such that for a normal cone $\gamma$ with vertex $Sd$, there is a natural isomorphism $\eta_\gamma: H(\gamma;-) \to \mathcal{L}(Sd,-)$. Here $\mathcal{L}(Sd,-)$ is the covariant hom-functor determined by $Sd\in v\mathcal{L}$. 
Also if $H({\gamma};-) = H({\gamma'};-)$, then the $M$-sets of the normal cones $\gamma$ and $\gamma'$ coincide; hence we define the $M$-set of an $H$-functor as $MH(\gamma;-) = M\gamma$.
\begin{dfn}
The \emph{normal dual} $N^\ast \mathcal{L}$ is a category with
\begin{equation} \label{eqnH1}
v N^\ast \mathcal{L} = \{ H(\epsilon;-) \: : \: \epsilon \in E(T\mathcal{L}) \}.
\end{equation} 
A morphism in $N^*\mathcal{L}$ between two $H$-functors $H(\epsilon;-)$ and $H(\epsilon';-)$ is a natural transformation $\tau$ as described in the following commutative diagram.
\begin{equation*}\label{Hfunct}
\xymatrixcolsep{4pc}\xymatrixrowsep{3pc}\xymatrix
{
 H(\epsilon;Se) \ar[r]^{\tau(Se)} \ar[d]_{H(\epsilon;\rho)}  
 & H(\epsilon';Se) \ar[d]^{H(\epsilon';\rho)} \\       
 H(\epsilon;Sf) \ar[r]^{\tau(Sf)} & H(\epsilon';Sf) 
}
\end{equation*}
\end{dfn}
Using the discussion above, the natural transformation $\tau$ in $N^*\mathcal{L}$ may be characterized as follows.
\begin{pro}\cite{cross} 
Let $\epsilon$ and $\epsilon'$ be idempotent normal cones in $\mathcal{L}$ with vertices $Sf$ and $Sg$ respectively. Then for every morphism $\tau \colon  H(\epsilon;-) \to H(\epsilon ';-)$ in $N^\ast \mathcal{L} $, there is a unique $\rho \colon  Sg \to Sf$ in $\mathcal{L}$ such that the following diagram commutes. 
\begin{equation*}\label{Homf}
\xymatrixcolsep{4pc}\xymatrixrowsep{3pc}\xymatrix
{
 H(\epsilon;-) \ar[r]^{\eta_\epsilon} \ar[d]_{\tau}  
 & \mathcal{L}(Sf,-) \ar[d]^{\mathcal{L}(\rho,-)} \\       
 H(\epsilon';-) \ar[r]^{\eta_{\epsilon'}} & \mathcal{L}(Sg,-) 
}
\end{equation*}
In this case, the component of the {natural transformation}\index{natural transformation} $\tau$ at $Se$ is the map given by:
\begin{equation} \label{eqnH2}
\tau(Se) \colon  \epsilon \ast \varrho^\circ \longmapsto \epsilon' \ast (\rho \varrho)^\circ. 
\end{equation}
\end{pro}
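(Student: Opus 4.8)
The plan is to recognise the statement as a concrete instance of the Yoneda lemma, exploiting the representability of the $H$-functors recorded immediately above. Throughout I keep the paper's convention that functions (and natural transformations) compose from left to right, so that the contravariance of the hom-functor in its source argument is what reverses an arrow's direction.

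First I would use the natural isomorphisms $\eta_\epsilon \colon H(\epsilon;-) \to \mathcal{L}(Sf,-)$ and $\eta_{\epsilon'} \colon H(\epsilon';-) \to \mathcal{L}(Sg,-)$, whose component at $Se$ is the bijection $\epsilon \ast \varrho^\circ \mapsto \varrho$ with inverse $\sigma \mapsto \epsilon' \ast \sigma^\circ$. Since these are isomorphisms in the functor category, precomposing and postcomposing a given $\tau \colon H(\epsilon;-) \to H(\epsilon';-)$ with them yields a natural transformation $\bar\tau := \eta_\epsilon^{-1}\,\tau\,\eta_{\epsilon'} \colon \mathcal{L}(Sf,-) \to \mathcal{L}(Sg,-)$ between covariant hom-functors, and this assignment $\tau \mapsto \bar\tau$ is a bijection. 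Thus the problem is transported to the classical question of describing natural transformations between representable functors on $\mathcal{L}$.

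Next I would invoke the Yoneda lemma for the category $\mathcal{L}$: natural transformations $\mathcal{L}(Sf,-) \to \mathcal{L}(Sg,-)$ are in bijection with the set $\mathcal{L}(Sg,Sf)$, the correspondence sending $\bar\tau$ to the morphism $\rho := \bar\tau(Sf)(1_{Sf}) \colon Sg \to Sf$, and sending a morphism $\rho \colon Sg \to Sf$ back to the transformation $\mathcal{L}(\rho,-)$ of precomposition, $\varrho \mapsto \rho\varrho$. This produces a unique $\rho \in \mathcal{L}(Sg,Sf)$ with $\bar\tau = \mathcal{L}(\rho,-)$; substituting back $\bar\tau = \eta_\epsilon^{-1}\,\tau\,\eta_{\epsilon'}$ and rearranging gives exactly the commuting square $\eta_\epsilon\,\mathcal{L}(\rho,-) = \tau\,\eta_{\epsilon'}$ of the statement, while the uniqueness of $\rho$ is inherited from the bijectivity in Yoneda.

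Finally, for the explicit component I would chase an element of $H(\epsilon;Se)$ around this square, reading it as $\tau = \eta_\epsilon\,\mathcal{L}(\rho,-)\,\eta_{\epsilon'}^{-1}$ and evaluating at $Se$: an element $\epsilon \ast \varrho^\circ$ is carried by $\eta_\epsilon(Se)$ to $\varrho \in \mathcal{L}(Sf,Se)$, then by $\mathcal{L}(\rho,-)(Se)$ to $\rho\varrho \in \mathcal{L}(Sg,Se)$, and then by $\eta_{\epsilon'}(Se)^{-1}$ to $\epsilon' \ast (\rho\varrho)^\circ$, which is the asserted formula $\tau(Se)\colon \epsilon \ast \varrho^\circ \mapsto \epsilon' \ast (\rho\varrho)^\circ$. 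I expect the only delicate point to be bookkeeping rather than conceptual: one must verify that $\eta_\gamma(Se)$ is indeed the well-defined bijection $\gamma \ast \varrho^\circ \mapsto \varrho$ (this is precisely the representability asserted before the statement) and keep the left-to-right composition order consistent so that $\rho$ genuinely points from $Sg$ to $Sf$; granting representability, the remainder is a routine Yoneda and naturality argument.
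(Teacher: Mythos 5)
The paper does not prove this proposition; it is quoted from Nambooripad's monograph \cite{cross} without argument, so there is no in-text proof to compare against. Your Yoneda-based argument is correct and is essentially the standard (and the cited source's) route: conjugate $\tau$ by the representing isomorphisms $\eta_\epsilon$, $\eta_{\epsilon'}$, apply the Yoneda lemma to get a unique $\rho\in\mathcal{L}(Sg,Sf)$ with $\eta_\epsilon^{-1}\tau\eta_{\epsilon'}=\mathcal{L}(\rho,-)$, and chase an element to recover the component formula $\epsilon\ast\varrho^\circ\mapsto\epsilon'\ast(\rho\varrho)^\circ$; your handling of the left-to-right composition and of the contravariance making $\rho$ point from $Sg$ to $Sf$ is consistent throughout. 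You correctly isolate the one content-bearing input, namely that $\eta_\gamma(Se)\colon\gamma\ast\varrho^\circ\mapsto\varrho$ is a well-defined bijection; the paper asserts this representability without proof, and it does hold (injectivity follows by evaluating the cones $\gamma\ast\varrho^\circ$ and $\gamma\ast\varrho'^\circ$ at an object $Sh$ where $\gamma(Sh)$ is an isomorphism, which forces $\varrho^\circ=\varrho'^\circ$ and equal images, hence $\varrho=\varrho'$), so granting what the paper grants, your proof is complete.
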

\begin{thm}\cite{cross} \label{thm1}
The normal dual $N^*\mathcal{L}$ forms a normal category and it is isomorphic to the $\mathcal{R}$ category associated with the regular semigroup $T\mathcal{L}$.
\end{thm}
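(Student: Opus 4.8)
The plan is to prove both assertions at once by constructing an explicit isomorphism of categories $F\colon N^\ast\mathcal{L}\to\mathcal{R}$, where $\mathcal{R}$ denotes the category of principal right ideals of the regular semigroup $T\mathcal{L}$. Since the right ideal category of any regular semigroup is automatically a normal category (this is exactly the foundational fact that normal categories are precisely those arising from regular semigroups), the normality of $N^\ast\mathcal{L}$ becomes a free consequence of the isomorphism; so the whole theorem reduces to exhibiting $F$ and checking that it is bijective on objects, fully faithful, and functorial. Throughout I would exploit the already-noted fact that the $\mathcal{L}$-category of $T\mathcal{L}$ is isomorphic to $\mathcal{L}$, which lets me translate freely between morphisms of $\mathcal{L}$ and partial translations in $T\mathcal{L}$.

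On objects I would set $F\colon H(\epsilon;-)\mapsto \epsilon\,T\mathcal{L}$ for $\epsilon\in E(T\mathcal{L})$, and the first task is to see that this is a well-defined bijection. The crucial point is that the $H$-functor encodes exactly the principal right ideal: unwinding the definition \eqref{eqnH11} together with the composition $\gamma\cdot\sigma=\gamma\ast(\sigma(Sd))^\circ$, one checks that $H(\epsilon;Se)$ is precisely the set of cones of $\epsilon\,T\mathcal{L}$ whose vertex is contained in $Se$, and that the connecting maps $H(\epsilon;\rho)$ are intrinsic to these cones. Consequently $H(\epsilon;-)$ and $H(\epsilon';-)$ coincide if and only if $\epsilon\,T\mathcal{L}=\epsilon'\,T\mathcal{L}$, that is, if and only if $\epsilon\,\mathscr{R}\,\epsilon'$ in $T\mathcal{L}$; this gives well-definedness and injectivity, while surjectivity onto the object set of $\mathcal{R}$ is immediate from \eqref{eqnH1}.

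For morphisms I would use the preceding Proposition, which identifies a morphism $\tau\colon H(\epsilon;-)\to H(\epsilon';-)$ with a unique $\rho\colon Sg\to Sf$ in $\mathcal{L}$, where $Sf,Sg$ are the vertices of $\epsilon,\epsilon'$. Transporting $\rho$ through the isomorphism between $\mathcal{L}$ and the $\mathcal{L}$-category of $T\mathcal{L}$ realises it as a partial right translation $\rho(\epsilon',\beta,\epsilon)$ for a unique $\beta\in\epsilon'\,T\mathcal{L}\,\epsilon$, and I would then define $F(\tau)=\lambda(\epsilon,\beta,\epsilon')$, the partial left translation of $T\mathcal{L}$ determined by the same $\beta$. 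Since a right translation is faithful in $\beta$ on $(T\mathcal{L})\epsilon'$ (evaluate at $\epsilon'$) and a left translation is faithful in $\beta$ on $\epsilon\,T\mathcal{L}$ (evaluate at $\epsilon$), the chain $\tau\mapsto\rho\mapsto\beta\mapsto\lambda(\epsilon,\beta,\epsilon')$ is a bijection of $N^\ast\mathcal{L}(H(\epsilon;-),H(\epsilon';-))$ onto $\mathcal{R}(\epsilon\,T\mathcal{L},\epsilon'\,T\mathcal{L})$, so $F$ is fully faithful.

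It remains to check functoriality, and this is where I expect the bulk of the bookkeeping. Identities are easy: the identity on $H(\epsilon;-)$ corresponds to $\rho=1_{Sf}$, hence to $\beta=\epsilon$ and to $\lambda(\epsilon,\epsilon,\epsilon)=1_{\epsilon T\mathcal{L}}$. For composition, given $\tau,\tau'$ corresponding to $\rho,\rho'$ and to $\beta,\beta'$, formula \eqref{eqnH2} shows that $\tau\tau'$ acts by $\epsilon\ast\varrho^\circ\mapsto\epsilon''\ast(\rho'\rho\,\varrho)^\circ$, so it corresponds to the $\mathcal{L}$-morphism $\rho'\rho$ and thence to $\beta'\beta\in\epsilon''\,T\mathcal{L}\,\epsilon$; on the other side the left translations compose as $\lambda(\epsilon,\beta,\epsilon')\lambda(\epsilon',\beta',\epsilon'')=\lambda(\epsilon,\beta'\beta,\epsilon'')$, matching exactly. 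The delicate point throughout is the direction-reversal built into the normal dual: a natural transformation pointing $H(\epsilon;-)\to H(\epsilon';-)$ is encoded by an $\mathcal{L}$-morphism $Sg\to Sf$ pointing the opposite way, and one must verify that the two order-reversals --- passing from $\tau$ to $\rho$, and then from the right translation $\rho(\epsilon',\beta,\epsilon)$ to the left translation $\lambda(\epsilon,\beta,\epsilon')$ --- combine to give a genuinely covariant, composition-preserving $F$. This forces careful tracking of how the epimorphic-component operation $(-)^\circ$ interacts with all three compositions (of natural transformations, of $\mathcal{L}$-morphisms via \eqref{eqnH2}, and of left translations in $\mathcal{R}$); but once it is confirmed that the hom-functor identification $\eta_\epsilon$ of the Proposition is natural in $\epsilon$, $F$ is an isomorphism of categories and the normality of $N^\ast\mathcal{L}$ follows at once.
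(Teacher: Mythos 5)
The paper itself gives no proof of this theorem --- it is quoted from Nambooripad's memoir \cite{cross} --- and your argument is a correct reconstruction of the standard proof given there: the object map $H(\epsilon;-)\mapsto\epsilon\,T\mathcal{L}$, the identification of a morphism $\tau$ with a unique $\mathcal{L}$-morphism between the vertices via the preceding Proposition, and the transport of the resulting partial right translation to the left translation $\lambda(\epsilon,\beta,\epsilon')$. The only points you leave implicit (well-definedness of $H(\epsilon;\rho)$ on cones, the equivalence $H(\epsilon;-)=H(\epsilon';-)\iff\epsilon\,\mathscr{R}\,\epsilon'$, and that the isomorphism carries the inclusion/partial-order structure so that normality genuinely transports) are exactly the routine verifications carried out in \cite{cross}, so the sketch is sound.
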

\begin{rmk}
Dual properties hold for the category $\mathcal{R}$ of principal right ideals of $S$. The principal cones associated are represented as $\lambda^a$ given by $\lambda^a(eS) = \lambda(e,ae,f)$. There is an anti-homomorphism $a\mapsto\lambda^a$ from $S$ to $T\mathcal{R}$. The category $\mathcal{R}$ is isomorphic to the $\mathcal{L}$ category associated with the semigroup $T\mathcal{R}$, and the normal dual $N^*\mathcal{R} $ is isomorphic to $\mathcal{R}$ category associated with the regular semigroup $T\mathcal{R}$.
\end{rmk}

Thus we have four normal categories associated with a regular semigroup $S$, namely $\mathcal{L}$, $\mathcal{R}$, $N^*\mathcal{L}$ and $N^*\mathcal{R}$. Using these categories and a pair of cross-connection functors, Nambooripad explicitly described the relationship between the principal left and right ideals of the given regular semigroup.

To precisely give the relationship between the categories, we need the concept of a \emph{local isomorphism}. The use of the terminology and idea of a local isomorphism in the structure of regular semigroups, may be traced back to D. B. McAlister \cite{mcal}, where he used it to describe the structure of locally inverse semigroups. Later A. R. Rajan \cite{loc} used it to describe the local isomorphisms of Grillet's cross-connections. He observed that the order-isomorphisms of principal ideals arose from the $\omega$-isomorphisms of the biordered set. To describe this notion in the context of normal categories, we need to define an ideal of a category.

\begin{dfn}\label{ideal}
An \emph{ideal} $\langle eS \rangle$ of the category $\mathcal{R}$ is the full subcategory of $\mathcal{R}$ whose objects are the principal ideals $fS\subseteq eS$ in $\mathcal{R}$.
\end{dfn}

\begin{thm}\cite{cross}
Given a regular semigroup $S$ with normal categories $\mathcal{L}$ and $\mathcal{R}$, there is a functor $\Gamma\colon  \mathcal{R} \to N^*\mathcal{L}$ such that $\Gamma$ is inclusion preserving, fully faithful and for each $eS \in v\mathcal{R}$, $\Gamma_{|\langle eS \rangle}$ is an isomorphism of the ideal $\langle eS \rangle$ onto $\langle \Gamma(eS) \rangle$ given by
\begin{equation}
\Gamma(eS) = H(\rho^e;-) \quad\text{ and }\quad \Gamma(\lambda(e,u,f)) = \eta_{\rho^e}\mathcal{L}(\rho(f,u,e),-)\eta_{\rho^f}^{-1}.
\end{equation}
\end{thm}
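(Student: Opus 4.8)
The plan is to verify, in order, that $\Gamma$ is a well-defined functor, that it is fully faithful, and finally that it is inclusion preserving and restricts to isomorphisms of ideals; the last of these carries the real content. First I would check the object assignment lands in $v N^*\mathcal{L}$: since $a\mapsto\rho^a$ is a homomorphism from $S$ to $T\mathcal{L}$ and $e\in E(S)$, the cone $\rho^e=\rho^e\cdot\rho^e=\rho^{ee}$ is idempotent, so $H(\rho^e;-)$ is an object of $N^*\mathcal{L}$ by \eqref{eqnH1}. For the morphism assignment, a morphism $\lambda(e,u,f)\colon eS\to fS$ has $u\in fSe$, so $\rho(f,u,e)$ is a genuine morphism $Sf\to Se$ in $\mathcal{L}$; hence $\eta_{\rho^e}\mathcal{L}(\rho(f,u,e),-)\eta_{\rho^f}^{-1}$ is a composite of a natural isomorphism, the natural transformation induced on hom-functors by $\rho(f,u,e)$, and a second natural isomorphism, and is therefore a natural transformation $H(\rho^e;-)\to H(\rho^f;-)$. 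By the preceding Proposition this is precisely a morphism of $N^*\mathcal{L}$, so $\Gamma$ is well defined on morphisms. Functoriality I would then check directly: the identity of $eS$ is $\lambda(e,e,e)$, and since $\rho(e,e,e)=1_{Se}$ we get $\Gamma(\lambda(e,e,e))=\eta_{\rho^e}\,1_{\mathcal{L}(Se,-)}\,\eta_{\rho^e}^{-1}=1_{H(\rho^e;-)}$; for composites one has $\lambda(e,u,f)\lambda(f,v,g)=\lambda(e,vu,g)$ together with $\rho(g,v,f)\rho(f,u,e)=\rho(g,vu,e)$ in $\mathcal{L}$, so the contravariant functoriality $\mathcal{L}(\rho(f,u,e),-)\mathcal{L}(\rho(g,v,f),-)=\mathcal{L}(\rho(g,vu,e),-)$ combined with the cancellation $\eta_{\rho^f}^{-1}\eta_{\rho^f}=1$ shows that $\Gamma(\lambda(e,u,f))\Gamma(\lambda(f,v,g))=\Gamma(\lambda(e,vu,g))$.

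For fully faithfulness I would use the Yoneda principle packaged into the Proposition. The assignment $\lambda(e,u,f)\mapsto\rho(f,u,e)$ is a bijection between $\mathcal{R}(eS,fS)$ and $\mathcal{L}(Sf,Se)$, since both hom-sets are parametrised bijectively by $u\in fSe$. Conjugating by the fixed natural isomorphisms $\eta_{\rho^e},\eta_{\rho^f}$ matches the natural transformations induced by morphisms $Sf\to Se$ with the natural transformations $H(\rho^e;-)\to H(\rho^f;-)$, and by the Proposition every morphism of $N^*\mathcal{L}$ arises in this way from a unique morphism of $\mathcal{L}$. Composing these bijections shows that $\Gamma$ carries $\mathcal{R}(eS,fS)$ bijectively onto $N^*\mathcal{L}(H(\rho^e;-),H(\rho^f;-))$, which is exactly fully faithfulness.

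The substantial part is that $\Gamma$ is inclusion preserving and locally an isomorphism. An inclusion $fS\subseteq eS$ in $\mathcal{R}$ holds precisely when $ef=f$, with inclusion morphism $\lambda(f,f,e)$; I would compute $\Gamma(\lambda(f,f,e))=\eta_{\rho^f}\mathcal{L}(\rho(e,f,f),-)\eta_{\rho^e}^{-1}$ and identify this with the inclusion of $H(\rho^f;-)$ into $H(\rho^e;-)$. To make sense of the latter I would pass through Theorem \ref{thm1}, under which $N^*\mathcal{L}\cong\mathcal{R}(T\mathcal{L})$ sends $H(\rho^e;-)$ to the principal right ideal $\rho^e\,T\mathcal{L}$; there $\rho^f\,T\mathcal{L}\subseteq\rho^e\,T\mathcal{L}$ is equivalent to $\rho^e\rho^f=\rho^{ef}=\rho^f$, which holds exactly when $ef=f$. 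This pins down the inclusions on both sides and shows they correspond under $\Gamma$. Finally, for the ideal $\langle eS\rangle$, fully faithfulness already yields an isomorphism onto its image, so it remains to prove that the object map $fS\mapsto H(\rho^f;-)$ is a bijection from $\{fS : fS\subseteq eS\}$ onto $\{H(\epsilon;-) : H(\epsilon;-)\subseteq H(\rho^e;-)\}$.

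The hard part, which I expect to be the main obstacle, is the surjectivity of this object map: one must show that every idempotent normal cone $\epsilon$ with $H(\epsilon;-)\subseteq H(\rho^e;-)$ is of the form $\rho^f$ for some idempotent $f\in S$ with $fS\subseteq eS$. This is precisely the assertion that $a\mapsto\rho^a$ is a \emph{local} isomorphism rather than merely a homomorphism, and it cannot be obtained from Yoneda alone; it rests on the finer structure of $T\mathcal{L}$, namely that the principal cones below $\rho^e$ exhaust the idempotents of the ideal $\rho^e\,T\mathcal{L}$. Once surjectivity is in hand, injectivity follows since $H(\rho^f;-)=H(\rho^{f'};-)$ forces $\rho^f\,T\mathcal{L}=\rho^{f'}\,T\mathcal{L}$, hence $fS=f'S$ by Theorem \ref{thm1}, and inclusion preservation upgrades the bijection to an order isomorphism. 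Combining fully faithfulness with this object bijection gives that $\Gamma_{|\langle eS\rangle}$ is an isomorphism of $\langle eS\rangle$ onto $\langle\Gamma(eS)\rangle$, completing the proof.
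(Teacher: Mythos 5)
The paper itself offers no proof of this theorem: it is quoted from Nambooripad \cite{cross}, so there is nothing internal to compare your argument against. Judged on its own terms, the formal portions of your proposal are correct and well organised: $\rho^e$ is idempotent because $a\mapsto\rho^a$ is a homomorphism, the variance bookkeeping for $\lambda(e,u,f)\mapsto\rho(f,u,e)$ and the contravariant hom-functor is right, the functoriality computation via $\rho(g,v,f)\rho(f,u,e)=\rho(g,vu,e)$ checks out, and fully faithfulness does follow by combining the bijection $u\mapsto\lambda(e,u,f)$ (resp.\ $u\mapsto\rho(f,u,e)$) of $fSe$ onto the two hom-sets with the uniqueness clause of the Proposition characterising morphisms of $N^*\mathcal{L}$. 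One small overstatement: you claim $\rho^f T\mathcal{L}\subseteq\rho^e T\mathcal{L}$ holds ``exactly when'' $ef=f$; the converse direction would need injectivity of $a\mapsto\rho^a$, which fails in general (e.g.\ left zero semigroups). Only the forward implication is needed for inclusion preservation, so this is harmless.

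The genuine gap is the one you flag yourself and then do not close: the surjectivity of $fS\mapsto H(\rho^f;-)$ from $\{fS: fS\subseteq eS\}$ onto the objects of $\langle H(\rho^e;-)\rangle$. This is not a technicality to be deferred --- it is the entire non-formal content of the theorem (everything else is Yoneda bookkeeping), and writing ``once surjectivity is in hand'' leaves the theorem unproved. It can in fact be closed by a short direct computation with the cone product \eqref{eqnsg1}: if $\epsilon$ is an idempotent cone with $\epsilon T\mathcal{L}\subseteq\rho^e T\mathcal{L}$, i.e.\ $\rho^e\cdot\epsilon=\epsilon$, write $\epsilon(Se)=\rho(e,u,d)$ with $u\in eSd$ and compute $\rho^e\cdot\epsilon=\rho^e\ast(\epsilon(Se))^\circ$ componentwise: its value at $Sg$ is $\rho(g,ge,e)\rho(e,u,h)=\rho(g,gu,h)$ with $h\in E(L_u)$, so $\rho^e\cdot\epsilon=\rho^u$ and hence $\epsilon=\rho^u$ is itself a principal cone with $u\in eS$ regular. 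Choosing $f=uu'\in E(R_u)$ for an inverse $u'$ of $u$ gives $ef=f$ (so $fS\subseteq eS$) and $\rho^f\,\mathscr{R}\,\rho^u$ in $T\mathcal{L}$, whence $H(\epsilon;-)=H(\rho^f;-)$ by Theorem \ref{thm1}. Your stated lemma that ``the principal cones below $\rho^e$ exhaust the idempotents of the ideal $\rho^e T\mathcal{L}$'' is essentially this fact, but asserting it is not proving it, and as phrased it should be weakened to exhaustion up to $\mathscr{R}$-equivalence (you need $H(\epsilon;-)=H(\rho^f;-)$ with $f$ idempotent, not $\epsilon=\rho^f$ literally). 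Until this computation, or an equivalent one, is supplied, the proposal establishes only that $\Gamma$ is a fully faithful inclusion-preserving functor, not that it is a local isomorphism.
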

Such a functor $\Gamma$ is called a \emph{local isomorphism} from $\mathcal{R}$ to $N^*\mathcal{L}$. Moreover for every $Sf \in v\mathcal{L}$, there is some $eS \in v\mathcal{R}$ such that $Sf \in M\Gamma(eS)$.
\begin{dfn}
A triplet $(\mathcal{R},\mathcal{L};\Gamma)$ is called a \emph{cross-connection} if $\Gamma$ is a \emph{local isomorphism} from $\mathcal{R}$ to $N^*\mathcal{L}$ such that for every $Sf \in v\mathcal{L}$, there is some $eS \in v\mathcal{R}$ such that $Sf \in M\Gamma(eS)$.
\end{dfn}
\begin{rmk}\label{rmkdual}
Dually, we have a \emph{dual cross-connection} $(\mathcal{L},\mathcal{R};\Delta)$ defined by the local isomorphism $\Delta\colon  \mathcal{L} \to N^*\mathcal{R}$ as follows.
\begin{equation}
\Delta(Se) = H(\lambda^e;-) \quad\text{ and }\quad \Delta(\rho(e,u,f)) = \eta_{\lambda^e}\mathcal{R}(\lambda(f,u,e),-)\eta_{\lambda^f}^{-1}.
\end{equation}
\end{rmk}
Since $\mathcal{L}$ and $\mathcal{R}$ are cross-connected with $\Gamma$ and the dual $\Delta$, by category isomorphisms \cite{mac}, we have two associated bifunctors $\Gamma(-,-)\colon  \mathcal{L}\times\mathcal{R} \to \bf{Set}$ and $\Delta(-,-)\colon  \mathcal{L}\times\mathcal{R} \to \bf{Set}$ given as follows:
\begin{subequations}
\begin{align}
\Gamma(Se,fS) &= \Gamma(fS)(Se) \\
\Gamma(\rho,\lambda) = \Gamma(fS)(\rho)\Gamma(\lambda)&(Se') = \Gamma(\lambda)(Se)\Gamma(f'S)(\rho)\\
\Delta(Se,fS) &= \Delta(Se)(fS)\\
\Delta(\rho,\lambda) = \Delta(Se)(\lambda)\Delta(\rho)&(f'S)) = \Delta(\rho)(fS)\Delta(Se')(\lambda)) 
\end{align}
\end{subequations}
for all $(Se,fS) \in v\mathcal{L}\times v\mathcal{R}$ and $(\rho,\lambda):(Se,fS) \to (Se',f'S)$.

Using the bifunctors, we obtain the following intermediary regular semigroups which are subsemigroups of $T\mathcal L$ and $T\mathcal R$ respectively,
\begin{subequations}
\begin{align}
U\Gamma = & \bigcup\: \{  \Gamma(Se,fS) : (Se,fS) \in v\mathcal{L} \times v\mathcal{R} \} \\
U\Delta = & \bigcup\: \{  \Delta(Se,fS) : (Se,fS) \in v\mathcal{L} \times v\mathcal{R} \},
\end{align}
\end{subequations}
which may be characterized as follows:
$$U\Gamma= \{\rho^a:a\in S\} \:\text{ and }\:U\Delta = \{\lambda^a:a \in S\}.$$
There is a natural isomorphism $\chi_\Gamma$ between the bifunctors $\Gamma(-,-)$ and $\Delta(-,-)$ called the \emph{duality} associated with the semigroup $S$. Using $\chi_\Gamma$, we can \emph{link} certain normal cones in $U\Gamma$ with those in $U\Delta$. Given a cross-connection $\Gamma$ with the dual $\Delta$, a cone $\gamma \in U\Gamma$ is said to be \emph{linked} to $\delta \in U\Delta$, if there is a $(Se,fS) \in v\mathcal{L} \times v\mathcal{R}$ such that $\gamma \in \Gamma(Se,fS)$ and $ \delta = \chi_\Gamma(Se,fS)(\gamma)$. The pairs of linked cones $(\gamma,\delta)$ will form a regular semigroup called the \emph{cross-connection semigroup} $\tilde{S}\Gamma$ determined by $\Gamma$. It may be shown that the linked cones are of the form $(\rho^a,\lambda^a)$, and hence 
$$ \tilde{S}\Gamma = \:\{\: (\rho^a,\lambda^a) : a\in S \} .$$
For $(\rho^a,\lambda^a),( \rho^b,\lambda^b) \in \tilde{S}\Gamma$, the binary operation is defined by 
$$ (\rho^a,\lambda^a) \circ (\rho^b,\lambda^b) = (\rho^a.\rho^b,\lambda^b.\lambda^a).$$
Then the map $a \mapsto(\rho^a,\lambda^a)$ is an isomorphism from $S$ to $\tilde{S}\Gamma$. This gives the cross-connection representation of the regular semigroup $S$ as illustrated in the diagram below.
\begin{equation*}
\xymatrixrowsep{2pc}\xymatrixcolsep{3pc}\xymatrix
{
& \xy*{S}*\cir<10pt>{}\endxy \ar@/_7pc/@{->}[ddddl]_{a\mapsto\rho^a} \ar@/^7pc/@{->}[ddddr]^{a\mapsto\lambda^a} \ar@{.}[ld] \ar@{.}[rd]\\
\mathcal{R} \ar@{->}[d]^{\Gamma} \ar@/^3pc/@{.}[ddrr] & & 
\mathcal{L} \ar@{->}[d]_{\Delta} \ar@/_3pc/@{.}[ddll]\\
N^*\mathcal{L} &\xtwocell[d]{}<>{^{\chi_\Gamma}}{\mathcal{L}\times \mathcal{R}} \ar@/_2pc/@{->}[d]_{\Gamma(-,-)}\ar@/^2pc/@{->}[d]^{\Delta(-,-)}& N^*\mathcal{R}\\
\xy*{T\mathcal{L}}*\cir<10pt>{}\endxy \:\gamma \ar@/^-1.5pc/@{<-->}[rr]_{\chi_\Gamma(-,-)} & \mathbf{Set}& \delta\: \xy*{T\mathcal{R}}*\cir<10pt>{}\endxy\\
\xy*{U\Gamma}*\cir<10pt>{}\endxy \ar@{^{(}->}@<-1pt>[u]\:\rho^a \ar@/_-1.5pc/@{<-->}[rr] & & \lambda^a\: \xy*{U\Delta}*\cir<10pt>{}\endxy \ar@{^{(}->}@<-3pt>[u]\\
&\xy*{\tilde{S}\Gamma}*\cir<10pt>{}\endxy\ar@{->>}[lu]\ar@{->>}[ru]
}
\end{equation*}

Although the construction is a bit complicated, it gives a lot of information regarding the structure of the semigroup. Recall that for a regular semigroup $S$, the map $a \mapsto\rho^a$ may be an isomorphism (for instance in the case of a regular monoid). In that case, every intermediary regular semigroup involved in the construction (denoted using circles in the above diagram) is isomorphic (or anti-isomorphic) to $S$. Then all the solid arrows in the above diagram become isomorphisms. It may be shown (using similar arguments as in \cite{tx}) that every cross-connection semigroup in this case is isomorphic to $S$. Hence, such a semigroup has a relatively simple ideal structure.

It is still an open problem to characterize the class of regular semigroups for which $S$ is isomorphic to $T\mathcal{L}$. But it certainly contains many non-monoids. For instance, this class includes
\begin{enumerate}
\item the semigroup of singular transformations on a set \cite{tx}, 
\item the semigroup of singular linear transformations on a vector space \cite{tlx}, 
\item the semigroup of singular order preserving mappings on a chain,
\item the semigroup of singular one-one partial mappings on a set, 
\item the semigroup of singular partial mappings on a set, 
\item semilattices,
\item Clifford semigroups.
\end{enumerate}
The uncited results above are due to the author. The classes (3-5) can be proved using similar arguments as in (1). Clifford semigroups case requires some work, but is not difficult; the semilattice case follows from this.

On the contrary, we can find several classes of regular semigroups which admit a {rich} cross-connection structure. Arbitrary completely regular semigroups, completely simple semigroups \cite{css}, bands, $Reg(S^\theta)$ (where $S$ is a regular semigroup) are some of those.

\section{Categories from the variants of the full transformation semigroup} \label{secvar}

Now we shift our attention to the variants of the full transformation semigroup. To discuss the normal categories arising from this semigroup, it may be helpful to look first at the full transformation semigroup $\mathscr{T}_X$. $\mathscr{T}_X$ is the monoid of \emph{all} transformations on a set $X$, and the semigroup $Sing(X)$ of all singular transformations on $X$ forms an important regular subsemigroup of $\mathscr{T}_X$. The cross-connections of $Sing(X)$ was studied in \cite{tx}. The $\mathcal{L}$ category associated was characterized as the powerset category $\mathscr{P}(X)$ of all non-empty proper subsets of $X$ with mappings as morphisms. The $\mathcal{R}$ category was characterized as the partition category $\Pi(X)$ with the set of objects $\{\bar{\pi} : \pi\text{ is a non-identity partition of }X \}$, where $\bar{\pi}$ denotes the set of all mappings from the partition $\pi$ to $X$. A morphism $\eta^*$ in $\Pi(X)$ from $\bar{\pi}_1$ to $\bar{\pi}_2$ was defined as $\eta^*: \alpha \mapsto \eta\alpha$ for every $\alpha \in \bar{\pi}_1$ where $\eta$ is mapping from $\pi_2$ to $\pi_1$. It was shown that even though a permutation $\theta$ of $X$ induces a non-trivial cross-connection, every cross-connection semigroup that arises is isomorphic to $Sing(X)$.

It is not difficult to see that by adjoining $X$ to $v\mathscr{P}(X)$ and the set of all mappings from $\{\{x\} : x\in X \}$ to $X$ to $v\Pi(X)$ we can characterize the categories in the \emph{full} transformation semigroup $\mathscr{T}_X$. Using similar arguments (in fact easier, since the \emph{largest object}-$X$ should map to $X$ under cross-connections), we can show that every semigroup that arises is isomorphic to $\mathscr{T}_X$. In the sequel, we shall denote the associated categories of $\mathscr{T}_X$, also by $\mathscr{P}(X)$ and $\Pi(X)$. Summarising, we have the following results.

\begin{thm}
$\mathscr{P}(X)$ and $\Pi(X)$ are normal categories. The semigroup $T\mathscr{P}(X)$ of all normal cones in $\mathscr{P}(X)$ is isomorphic to $\mathscr{T}_X$ and the semigroup $T\Pi(X)$ of all normal cones in $\Pi(X)$ is anti-isomorphic to $\mathscr{T}_X$. Every cross-connection semigroup that arises from these categories is isomorphic to $\mathscr{T}_X$.
\end{thm}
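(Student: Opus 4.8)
The plan is to reduce the entire statement to the general machinery of Section~\ref{seccxn} together with the singular case already settled in \cite{tx}, the only genuinely new feature being the adjoined top object. First I would identify $\mathscr{P}(X)$ and $\Pi(X)$ as the principal left and right ideal categories $\mathcal{L}$ and $\mathcal{R}$ of $\mathscr{T}_X$. For $\alpha\in\mathscr{T}_X$ one has $\mathscr{T}_X\alpha=\{\beta:\operatorname{im}\beta\subseteq\operatorname{im}\alpha\}$, so the principal left ideals are indexed by the non-empty subsets $\operatorname{im}\alpha$ of $X$, now including $X$ itself via the identity map; dually the principal right ideals are indexed by the kernel partitions $\ker\alpha$, now including the identity partition of singletons. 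A partial right translation $\rho(e,u,f)$ is determined by the restriction of $u$ to $\operatorname{im}e$, which can be an arbitrary map into $\operatorname{im}f$, so $\mathcal{L}(\mathscr{T}_Xe,\mathscr{T}_Xf)$ is canonically the full hom-set of $\mathscr{P}(X)$, and dually for $\mathcal{R}$ and $\Pi(X)$. I would check that these bijections respect composition, inclusions, splittings and normal factorizations, yielding category isomorphisms $\mathcal{L}\cong\mathscr{P}(X)$ and $\mathcal{R}\cong\Pi(X)$; since the principal ideal categories of a regular semigroup are normal, $\mathscr{P}(X)$ and $\Pi(X)$ inherit the normal category structure.

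Because $\mathscr{T}_X$ is a regular monoid, the general fact of Section~\ref{seccxn} that $a\mapsto\rho^a$ is an isomorphism $S\to T\mathcal{L}$ for a regular monoid $S$ gives at once $T\mathscr{P}(X)\cong\mathscr{T}_X$; dually, the anti-homomorphism $a\mapsto\lambda^a$ becomes an anti-isomorphism, so $T\Pi(X)$ is anti-isomorphic to $\mathscr{T}_X$. Feeding $T\mathscr{P}(X)\cong\mathscr{T}_X$ into Theorem~\ref{thm1} then identifies $N^*\mathscr{P}(X)$ with the $\mathcal{R}$-category of $T\mathscr{P}(X)$, hence with $\Pi(X)$. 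This last identification is what lets me regard any cross-connection $\Gamma\colon\Pi(X)\to N^*\mathscr{P}(X)$ as a local isomorphism of $\Pi(X)$ into a copy of itself.

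For the final assertion I would first classify the admissible $\Gamma$. A local isomorphism that meets the $M$-set covering condition is, through the identification above, a fully faithful inclusion-preserving functor that must send the top object of $\Pi(X)$ to the top object; arguing exactly as in \cite{tx}, each such $\Gamma$ is induced by a permutation $\theta$ of $X$ (this is the source of the pun in the title). I would then show that for every such $\theta$ the duality $\chi_\Gamma$ links precisely the principal cones $\rho^a$ with $\lambda^{a'}$, where $a'$ is obtained from $a$ by composing with $\theta$, so that the linked pairs are again indexed by $\mathscr{T}_X$ and, since $\theta$ is invertible, $a\mapsto(\rho^a,\lambda^{a'})$ is a bijective homomorphism $\mathscr{T}_X\to\tilde{S}\Gamma$. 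The decisive simplification over the singular case is that the adjoined top objects, the subset $X$ and the identity partition, are forced to correspond to the identity transformation; this pins down the component of $\chi_\Gamma$ at the vertex of the identity cone and leaves no residual freedom.

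The step I expect to be the main obstacle is this last verification: checking that $\chi_\Gamma$ links exactly the pairs $(\rho^a,\lambda^{a'})$ and that the resulting correspondence respects the sandwich composition $(\rho^a,\lambda^a)\circ(\rho^b,\lambda^b)=(\rho^a\cdot\rho^b,\lambda^b\cdot\lambda^a)$. This demands careful bookkeeping of how $\theta$ threads through the epimorphic components used in the cone product \eqref{eqnsg1}; the top-object rigidity is what keeps this tractable and forces the answer to be independent of $\theta$ up to isomorphism.
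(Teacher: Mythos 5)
Your proposal is correct and follows essentially the same route as the paper, which itself only sketches this theorem: identify $\mathscr{P}(X)$ and $\Pi(X)$ with the principal left and right ideal categories of $\mathscr{T}_X$, use the regular-monoid fact from Section \ref{seccxn} that $a\mapsto\rho^a$ is an isomorphism onto $T\mathcal{L}$ (dually an anti-isomorphism onto $T\mathcal{R}$), and reduce the classification of cross-connections to the permutation-induced ones of \cite{tx}, the adjoined top object forcing the local isomorphism to be an automorphism. Nothing in your argument diverges from what the paper intends.
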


It may be noticed here that given a permutation $\theta$, it induces a cross-connection between the categories $\mathscr{P}(X)$ and $\Pi(X)$ \cite{tx}. But the resulting cross-connection semigroup is isomorphic to $\mathscr{T}_X$. This cross-connection semigroup is in fact the cross-connection semigroup of the variant semigroup $\mathscr{T}_X^\theta$, where $\theta$ is a permutation. This can be seen as a reflection of the fact that the variant $\mathscr{T}_X^\theta$ is isomorphic to $\mathscr{T}_X$ if and only if $\theta$ is a permutation \cite{igd}.

In \cite{igd}, Dolinka and East explored the structure of $\mathscr{T}_X^\theta$, their idempotent generated subsemigroups, $Reg(\mathscr{T}_X^\theta)$, their ideals etc. In the process, they had described the structure of the regular part $Reg(\mathscr{T}_X^\theta)$ from the right and left translations. They demonstrated using the diagram \cite{igd} below, how a regular $\mathscr{D}$-class of $\mathscr{T}_X$ breaks up to form $\mathscr{D}^\theta$ classes in $\mathscr{T}_X^\theta$. Here $\mathscr{D}^\theta$ denotes Green's $\mathscr{D}$-relation on the variant $\mathscr{T}_X^\theta$. (Similarly, $\mathscr{R}^\theta$, $\mathscr{L}^\theta$ and $\mathscr{H}^\theta$ will be used henceforth.) 
\begin{center}
\includegraphics[scale=.3]{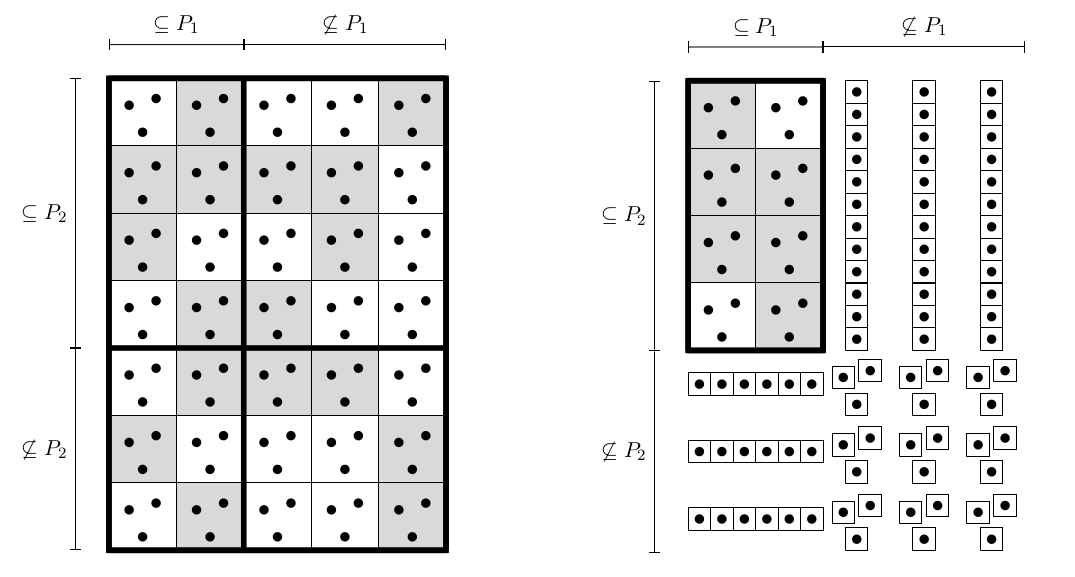}
\end{center}
The diagram on the left represents a typical $\mathscr{D}$-class of  $\mathscr{T}_X$. When the binary composition changes, it breaks up into four groups of transformations which may be described using the sets $P_1$ and $P_2$. The sets $P_1$ and $P_2$ characterized as follows are important in the sequel. 
$$P_1 = \{ a\in\mathscr{T}_X:a\theta\: \mathscr{R} \:\theta\}\text{ and }P_2 = \{ a\in\mathscr{T}_X:\theta a\: \mathscr{L} \: \theta\}$$
The first part $P_1 \cap P_2$ consists of regular elements of $\mathscr{T}_X^\theta$ and it forms a single $\mathscr{D}^\theta$ class in $\mathscr{T}_X^\theta$. The transformations which belong to $P_2\backslash P_1$ form non-regular $\mathscr{D}^\theta$-classes each consisting of a non-singleton $\mathscr{L}^\theta$-class. The $\mathscr{H}^\theta$-classes of this group are singletons. Similarly, the transformations which belong to $P_1\backslash P_2$ form non-regular $\mathscr{D}^\theta$-classes each consisting of a non-singleton $\mathscr{R}^\theta$-class. The rest of the transformations (those belonging to neither $P_1$ nor $P_2$) form non-regular singleton $\mathscr{D}^\theta$-classes.

Before we proceed to describe the categories in $Reg(\mathscr{T}_X^\theta)$, we need to fix some notations. Let $A$ be a subset of $X$, and $\alpha$, a partition (or an equivalence relation) on $X$. Borrowing the terminology from \cite{igd}, we shall say $A$ \emph{saturates} $\alpha$ if each $\alpha$-class contains at least one element of $A$. We say $\alpha$ \emph{separates} $A$ if each $\alpha$-class contains at most one element of $A$. Using this terminology, the subsets $P_1$ and $P_2$ may be described as follows.
$$P_1 = \{ a\in\mathscr{T}_X:\pi_\theta\text{ separates Im } a\}\text{ and }P_2 = \{ a\in\mathscr{T}_X:\text{Im }\theta \text{ saturates }\pi_a\}$$
It is shown that $Reg(\mathscr{T}_X^\theta) =P_1\cap P_2$. Further, the Green relations in $Reg(\mathscr{T}_X^\theta)$ are described as follows, as restrictions of the Green relations in $\mathscr{T}_X$.
\begin{pro}\cite{igd}
Let $a,b \in Reg(\mathscr{T}_X^\theta)$, then
\begin{enumerate}
\item $a \mathscr{L}^\theta b$ if and only if Im $a= $ Im $b$.
\item $a \mathscr{R}^\theta b$ if and only if $\pi_a= \pi_b$.
\item $a \mathscr{D}^\theta b$ if and only if rank $a= $ rank $b$.
\end{enumerate}
\end{pro}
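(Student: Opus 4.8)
The plan is to compute the principal one-sided ideals of the variant directly and read the three relations off them, with the regularity conditions $P_1$ and $P_2$ entering only in the reverse implications. (One could instead invoke general results on Green's relations in variants, but the direct route is self-contained here.) I keep the left-to-right convention, so that $\operatorname{Im}(\alpha\beta)\subseteq\operatorname{Im}\beta$ and $\pi_\alpha\subseteq\pi_{\alpha\beta}$ for all $\alpha,\beta\in\mathscr{T}_X$.

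For (1), since $s\ast a=s\theta a$, the principal left ideal is
\[
(\mathscr{T}_X^\theta)^1\ast a=\{a\}\cup\mathscr{T}_X\theta a,
\]
and every element of $\mathscr{T}_X\theta a$ has image contained in $\operatorname{Im}a$. Hence $a\,\mathscr{L}^\theta\,b$ forces $\operatorname{Im}b\subseteq\operatorname{Im}a$ and, symmetrically, $\operatorname{Im}a\subseteq\operatorname{Im}b$, which gives the forward direction. For the converse I would assume $\operatorname{Im}a=\operatorname{Im}b$ and build $s\in\mathscr{T}_X$ with $s\theta a=b$: for each $x$ the point $xb$ lies in $\operatorname{Im}a$, so $a^{-1}(xb)$ is a single $\pi_a$-class, which meets $\operatorname{Im}\theta$ precisely because $a\in P_2$; choosing $y_x$ in that intersection and $xs\in\theta^{-1}(y_x)$ yields $s\theta a=b$. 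Interchanging $a$ and $b$ completes $a\,\mathscr{L}^\theta\,b$.

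Part (2) is the dual computation on $a\ast(\mathscr{T}_X^\theta)^1=\{a\}\cup a\theta\mathscr{T}_X$. Every element of $a\theta\mathscr{T}_X$ has kernel containing $\pi_{a\theta}$, and $\pi_{a\theta}=\pi_a$ because $a\in P_1$ says exactly that $\theta$ is injective on $\operatorname{Im}a$; this yields the forward direction. Conversely, $\pi_a=\pi_b$ lets $b$ factor as $b=a\bar b$ through $\operatorname{Im}a$, and the injectivity of $\theta$ on $\operatorname{Im}a$ lets me define $s$ on the distinct points $\{z\theta:z\in\operatorname{Im}a\}$ by $(z\theta)s=z\bar b$ (extended arbitrarily elsewhere), so that $a\theta s=b$.

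Finally, (3) follows from (1) and (2) once I use that $Reg(\mathscr{T}_X^\theta)=P_1\cap P_2$ is a regular semigroup, whence $\mathscr{D}^\theta=\mathscr{L}^\theta\circ\mathscr{R}^\theta$. Then $a\,\mathscr{D}^\theta\,b$ amounts to the existence of $c$ with $\operatorname{Im}c=\operatorname{Im}a$ and $\pi_c=\pi_b$; such a $c$ lies in $P_1\cap P_2$ exactly when $\operatorname{rank}a=\operatorname{rank}b$, since for a common rank $r$ any bijection from the $r$ classes of $\pi_b$ onto the $r$-point set $\operatorname{Im}a$ defines a suitable $c$, and $c\in P_1$ (as $\theta$ is injective on $\operatorname{Im}c=\operatorname{Im}a$, using $a\in P_1$) and $c\in P_2$ (as $\operatorname{Im}\theta$ saturates $\pi_c=\pi_b$, using $b\in P_2$); the forward direction is immediate because rank is preserved along $\mathscr{L}^\theta$ and $\mathscr{R}^\theta$. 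The main obstacle is the reverse halves of (1) and (2): producing the sandwich-compatible multipliers, which succeeds only because of the \emph{separation} and \emph{saturation} built into $P_1$ and $P_2$ — without restricting to the regular part these constructions break down.
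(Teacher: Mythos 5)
Your proof is correct, but note that the paper does not prove this proposition at all: it is imported verbatim from Dolinka and East \cite{igd}, so there is no internal argument to compare against. Your direct computation of the principal one-sided ideals $(\mathscr{T}_X^\theta)^1\ast a=\{a\}\cup\mathscr{T}_X\theta a$ and $a\ast(\mathscr{T}_X^\theta)^1=\{a\}\cup a\theta\mathscr{T}_X$, with the saturation condition of $P_2$ supplying the left multiplier and the separation condition of $P_1$ supplying the right one, is exactly the kind of self-contained verification the paper omits, and it correctly isolates where regularity is genuinely used (only in the reverse implications). Two small remarks. First, the identity $\mathscr{D}^\theta=\mathscr{L}^\theta\circ\mathscr{R}^\theta$ holds in \emph{any} semigroup because $\mathscr{L}$ and $\mathscr{R}$ always commute, so you do not need regularity of $Reg(\mathscr{T}_X^\theta)$ for that step; what you do need — and supply — is that the intermediate element $c$ can be chosen inside $P_1\cap P_2$ so that parts (1) and (2) apply to the pairs $(a,c)$ and $(c,b)$. (For the forward direction the intermediate element coming from $\mathscr{D}^\theta$ in $\mathscr{T}_X^\theta$ need not be regular, but your observation that rank is preserved along $\mathscr{L}^\theta$ and $\mathscr{R}^\theta$ holds for arbitrary elements, so nothing breaks.) Second, the relations here are Green's relations of the ambient variant $\mathscr{T}_X^\theta$ restricted to regular elements, not of the subsemigroup $Reg(\mathscr{T}_X^\theta)$; your multipliers $s$ are arbitrary elements of $\mathscr{T}_X$, which is the right reading, and for regular elements the two versions coincide anyway.
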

Hence from the discussion above, it is clear that the $\mathscr{L}^\theta $-classes of $Reg(\mathscr{T}_X^\theta)$ may be characterized as Im $a$, for $a \in P_1$. Similarly, the $\mathscr{R}^\theta $-classes of $Reg(\mathscr{T}_X^\theta)$ may be characterized as $\pi_a$, for $a \in P_2$. Observe that the $\mathscr{H}$-classes ${H}^\theta_a$ and ${H}_a$ coincide for $a \in Reg(\mathscr{T}_X^\theta)$. So the categories $\mathscr{P}_\theta$ and $\Pi_\theta$ associated with $Reg(\mathscr{T}_X^\theta)$ are the full subcategories of $\mathscr{P}(X)$ and $\Pi(X)$ defined as follows.
\begin{equation*}
v\mathscr{P}_\theta =\{ A : \pi_\theta \text{ separates } A\} \text{ and } v\Pi_\theta =\{ \bar{\pi} : \text{Im }\theta \text{ saturates } \pi \}
\end{equation*}
Then it can be easily shown that $\mathscr{P}_\theta$ and $\Pi_\theta$ form normal categories, and they are respectively the $\mathcal{L}$ and $\mathcal{R}$ categories associated with $Reg(\mathscr{T}_X^\theta)$. Since $\mathscr{P}_\theta$ is a full normal subcategory of $\mathscr{P}(X)$, every normal cone in $T\mathscr{P}_\theta$ will belong to $T\mathscr{P}(X)$, such that $T\mathscr{P}_\theta$ is a regular subsemigroup of $T\mathscr{P}(X)$ \cite{cross}. Hence, every normal cone in $T\mathscr{P}_\theta$ can be represented by a transformation in $\mathscr{T}_X$. In fact, we have the following result.
\begin{pro}
The semigroup $T\mathscr{P}_\theta$ of normal cones in $\mathscr{P}_\theta$ is isomorphic to $P_1$. The semigroup $T\Pi_\theta$ of normal cones in $\Pi_\theta$ is anti-isomorphic to $P_2$. 
\end{pro}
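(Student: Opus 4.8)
The plan is to exploit the isomorphism $\mathscr{T}_X \cong T\mathscr{P}(X)$, $a \mapsto \rho^a$, supplied by the previous theorem, and to show that it restricts to an isomorphism of $P_1$ (viewed as a subsemigroup of $(\mathscr{T}_X,\cdot)$) onto $T\mathscr{P}_\theta$. The guiding observation is that the vertex of the principal cone $\rho^a$ is exactly $\text{Im }a$, while the objects of $\mathscr{P}_\theta$ are precisely the subsets separated by $\pi_\theta$; comparing this with the description $P_1=\{a\in\mathscr{T}_X:\pi_\theta\text{ separates }\text{Im }a\}$ recalled above already suggests that the cones lying in $T\mathscr{P}_\theta$ should be exactly those indexed by $P_1$.

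First I would invoke the fact, granted in the excerpt, that since $\mathscr{P}_\theta$ is a \emph{full} normal subcategory of $\mathscr{P}(X)$, the semigroup $T\mathscr{P}_\theta$ is a regular subsemigroup of $T\mathscr{P}(X)$ \cite{cross}; hence every $\gamma \in T\mathscr{P}_\theta$ is the restriction to $v\mathscr{P}_\theta$ of a normal cone of $\mathscr{P}(X)$, and so $\gamma = \rho^a$ for a unique $a \in \mathscr{T}_X$. The heart of the argument is then the equivalence $\rho^a \in T\mathscr{P}_\theta \iff a \in P_1$. The forward implication is immediate, since a normal cone of $\mathscr{P}_\theta$ must have its vertex among the objects of $\mathscr{P}_\theta$, and $\text{Im }a \in v\mathscr{P}_\theta$ is by definition the statement that $\pi_\theta$ separates $\text{Im }a$, i.e. $a \in P_1$. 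For the converse, given $a \in P_1$ I would restrict $\rho^a$ to $v\mathscr{P}_\theta$ and verify the three cone axioms internally: the components stay morphisms because $\mathscr{P}_\theta$ is full, the inclusion-compatibility (2) is inherited, and axiom (3) holds because the isomorphism component $\rho^a(\text{Im }a)$ has both its objects in $v\mathscr{P}_\theta$.

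It then remains to see that the restriction $P_1 \to T\mathscr{P}_\theta$ is a homomorphism. Here I would first check that $P_1$ is closed under the usual composition of $\mathscr{T}_X$: if $a,b \in P_1$ then $\text{Im}(ab) \subseteq \text{Im }b$, and a subset of a $\pi_\theta$-separated set is again $\pi_\theta$-separated, whence $ab \in P_1$. The main obstacle will be confirming that the internal cone multiplication of $T\mathscr{P}_\theta$ coincides on $v\mathscr{P}_\theta$ with the restriction of the multiplication of $T\mathscr{P}(X)$; equivalently, that forming the epimorphic component (the image object) of a $\mathscr{P}_\theta$-morphism never yields an object outside $v\mathscr{P}_\theta$. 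This is exactly the downward closure of $v\mathscr{P}_\theta$ under subsets, which is what makes $\mathscr{P}_\theta$ a normal (not merely full) subcategory; granting it, the product $\rho^a\cdot\rho^b$ computed in $T\mathscr{P}_\theta$ agrees with $\rho^{ab}$ restricted to $v\mathscr{P}_\theta$, so the map is a homomorphism, and being bijective onto $T\mathscr{P}_\theta$ by the preceding paragraph, it is the desired isomorphism.

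The second statement follows by the dual argument. Using the anti-isomorphism $\mathscr{T}_X \to T\Pi(X)$, $a \mapsto \lambda^a$, whose principal cone $\lambda^a$ has vertex $\bar{\pi}_a$, I would show $\lambda^a \in T\Pi_\theta \iff \bar{\pi}_a \in v\Pi_\theta \iff \text{Im }\theta$ saturates $\pi_a \iff a \in P_2$, verifying the cone axioms in $\Pi_\theta$ exactly as before. Closure of $P_2$ comes from $\pi_a \subseteq \pi_{ab}$: every $\pi_{ab}$-class is a union of $\pi_a$-classes and hence still meets $\text{Im }\theta$, so $ab \in P_2$. Since the correspondence $a \mapsto \lambda^a$ reverses products, the restriction gives an anti-isomorphism of $P_2$ onto $T\Pi_\theta$, as claimed.
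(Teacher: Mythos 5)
Your argument is correct and follows essentially the same route as the paper: both identify $T\mathscr{P}_\theta$ inside $T\mathscr{P}(X)\cong\mathscr{T}_X$ (the paper via the components at singletons, you via restricting $a\mapsto\rho^a$) and then characterize the cones that survive as exactly those with $\text{Im }a\in v\mathscr{P}_\theta$, i.e.\ $a\in P_1$, with the dual argument for $P_2$. Your extra checks (closure of $P_1$, $P_2$ under composition and compatibility of the cone product with restriction) only make explicit what the paper delegates to the citation of Nambooripad's result on full normal subcategories.
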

\begin{proof}
First, observe that since $\theta$ is not a permutation, $\pi_\theta$ separates $\{x\}$ for every $x \in X$. Hence all the singletons $\{x\}$ belong to $v\mathscr{P}_\theta$. Any normal cone in $T\mathscr{P}_\theta$ is determined by the action on these singletons. Now given any $A \in v\mathscr{P}_\theta$, every transformation $a$ with Im $a\subseteq A$, will be contained in $T\mathscr{P}_\theta$, and in fact only these transformations. Hence 
$$T\mathscr{P}_\theta = \{ a\in\mathscr{T}_X:\text{ Im } a\in \mathscr{P}_\theta\}  = \{ a\in\mathscr{T}_X:\pi_\theta\text{ separates Im } a\} =P_1.$$
Arguing dually and observing that $T\Pi_\theta$ is a subsemigroup of $\mathscr{T}_X^\text{op}$, we can show that the semigroup $T\Pi_\theta$ is anti-isomorphic to $P_2$.
\end{proof}

Roughly speaking, the semigroup of cones of a normal category may be considered as a kind of universal semigroup with the given ideal structure. So, it is not surprising that $P_1$ and $P_2$ play critical roles in the characterisation of $Reg(\mathscr{T}_X^\theta)$. From the above discussion, it is clear that $P_1$ and $P_2$ are not mere subsets, but important regular semigroups, when seen as subsemigroups of $\mathscr{T}_X$ and $\mathscr{T}_X^\text{op}$ respectively. 

Now, we proceed to characterize the normal dual of the category $\mathscr{P}_\theta$. Observe that an $H$-functor on the category $\mathscr{P}_\theta$ may be represented as $H(a;-)$ for $a\in P_1$. As argued in \cite{tx}, it may be shown that the $H$-functor is completely determined by the partition $\pi_a$ of the transformation $a$. Since $P_1$ contains transformations with all partitions $\pi$ such that $|\pi| \leq \text{ rank } \theta$, we can see that the normal dual of $\mathscr{P}_\theta$ can be characterized as full subcategory of $\Pi(X)$ such that
$$vN^*\mathscr{P}_\theta =\{ \bar{\pi} : |\pi| \leq \text{ rank } \theta\}.$$
Dually, the normal dual of $\Pi_\theta$ can be characterized as the full subcategory of $\mathscr{P}(X)$ such that
$$vN^*\Pi_\theta =\{ A : |A| \leq \text{ rank } \theta \}.$$
To describe the cross-connections in $Reg(\mathscr{T}_X^\theta)$, we will also need the following lemma, whose dual statement is proved in \cite{tx}.
\begin{lem}\label{mset}
Let $a$ represent a normal cone in $\Pi(X)$. Then the $M$-set 
$$Ma \:=  \{ \bar{\pi} \in \Pi(X) :\text{ Im }a \text{ is a cross-section of } \pi \}.$$
\end{lem}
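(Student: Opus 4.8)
The plan is to unwind the definition of the $M$-set and reduce the statement to an elementary counting condition on the components of the cone. Unwinding the definition, $\bar\pi \in Ma$ precisely when the component at $\bar\pi$ of the cone represented by $a$ is an isomorphism of $\Pi(X)$. Thus the proof splits into two tasks: first, to write down that component explicitly, and second, to decide when it is an isomorphism.

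First I would identify the component. The cone represented by $a$ has vertex $\bar\pi_a$, so its component at $\bar\pi$ is a morphism $\eta^*\colon \bar\pi \to \bar\pi_a$ induced by some map $\eta\colon \pi_a \to \pi$, and the relevant $\eta$ is the one that carries each $\pi_a$-class $C$ to the $\pi$-class containing the unique point $y_C \in \text{Im } a$ onto which $a$ collapses $C$. This is the exact dual of the situation treated in \cite{tx}, where the component of the cone at a subset $A$ in $\mathscr{P}(X)$ is the corestricted map $a|_A\colon A \to \text{Im } a$; I would use that computation as a template, transporting it across the subset--partition duality and checking that the two descriptions match on vertices and respect composition.

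It then remains to characterise when $\eta^*$ is an isomorphism. In $\Pi(X)$ the morphism $\eta^*$ is an isomorphism exactly when the underlying map $\eta\colon \pi_a \to \pi$ is a bijection. Since $C \mapsto y_C$ is already a bijection from $\pi_a$ onto $\text{Im } a$ (the $\pi_a$-classes being the fibres of $a$), the map $\eta$ is injective precisely when distinct points of $\text{Im } a$ lie in distinct $\pi$-classes, i.e. when $\pi$ separates $\text{Im } a$, and it is surjective precisely when every $\pi$-class meets $\text{Im } a$, i.e. when $\text{Im } a$ saturates $\pi$. Combining the two, $\eta$ is a bijection if and only if $\text{Im } a$ meets each $\pi$-class in exactly one point, that is, $\text{Im } a$ is a cross-section of $\pi$; this yields the asserted description of $Ma$, and the existence of at least one such $\bar\pi$ simultaneously recovers the third cone axiom.

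The main obstacle I anticipate is bookkeeping rather than conceptual: correctly fixing the direction of the contravariant correspondence $\eta \mapsto \eta^*$, and verifying that an isomorphism of $\Pi(X)$ forces the underlying $\eta$ to be a genuine bijection of partitions (and not merely a map compatible with the normal-category structure), together with the degenerate cases where $\pi$ or $\pi_a$ is close to the identity partition. Once the component is pinned down, everything else is the routine separate/saturate bookkeeping. Alternatively, one can bypass the explicit computation and deduce the lemma formally from its dual in \cite{tx} through the duality between $\mathscr{P}(X)$ and $\Pi(X)$.
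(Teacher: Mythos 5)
Correct, and essentially the intended argument: the paper itself omits the proof, deferring to the dual statement in \cite{tx}, and your explicit identification of the component of the cone at $\bar{\pi}$ as $\eta^*$ with $\eta$ carrying each $\pi_a$-class $C$ to the $\pi$-class of the point of $\text{Im }a$ onto which $a$ collapses $C$, followed by the separates/saturates dichotomy, is precisely that computation transported across the subset--partition duality. The only point worth flagging is that you are (rightly) reading the $M$-set as the set of objects at which the cone's component is an isomorphism, i.e.\ Nambooripad's definition, rather than the looser description $M\gamma=\{Se : e\mathrel{\mathscr{D}}d\}$ given in Section~\ref{seccxn}, under which the stated equality would fail.
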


\section{Cross-connections of the variants}\label{secvarcxn}
Having described the normal categories in $Reg(\mathscr{T}_X^\theta)$, now we proceed to show how the categories $\mathscr{P}_\theta$ and $\Pi_\theta$ are {cross-connected} by the sandwich element $\theta$. We shall argue using the cross-connection $\Delta_\theta$ between the subsets, rather than the partitions, since the subset connection is more illuminating and working with the partition category is slightly cumbersome. We can dually extend the argument to $\Gamma_\theta$ as well.

\begin{pro}
For $A\in v\mathscr{P}_\theta$ and $f\colon A \to B$ in $\mathscr{P}_\theta$, a functor $\Delta_\theta\colon  \mathscr{P}_\theta \to N^*\Pi_\theta$ defined as follows is a cross-connection between the categories $\mathscr{P}_\theta$ and $\Pi_\theta$.
\begin{equation*}\label{eqndcrossrp}
\Delta_{\theta} (A) \:=\: A\theta \text{ and } \Delta_{\theta}(f) = (\theta_{|A})^{-1} f \theta 
\end{equation*}
\begin{equation*}\label{Delta}
\xymatrixcolsep{4pc}\xymatrixrowsep{3pc}\xymatrix
{
 A \ar[r]^{\theta} \ar[d]_{f}  
 & A\theta \ar[d]^{(\theta_{|A})^{-1} f \theta } \\       
 B \ar[r]^{\theta} & B\theta 
}
\end{equation*}
\end{pro}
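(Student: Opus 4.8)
The plan is to verify, one at a time, the conditions that make $\Delta_\theta$ a (dual) cross-connection in the sense of Remark~\ref{rmkdual}: that $\Delta_\theta$ is a well-defined functor $\mathscr{P}_\theta \to N^*\Pi_\theta$, that it is a \emph{local isomorphism} (inclusion preserving, fully faithful, and restricting to an isomorphism on each ideal), and finally that it satisfies the $M$-set covering condition. The single fact underlying the entire argument is that, since $A \in v\mathscr{P}_\theta$ means $\pi_\theta$ separates $A$, the restriction $\theta_{|A}\colon A \to A\theta$ is injective, hence a bijection; this is exactly what gives meaning to the inverse $(\theta_{|A})^{-1}$ in the definition.

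First I would check that $\Delta_\theta$ lands in $N^*\Pi_\theta$. On objects, $A\theta \subseteq \text{Im}\,\theta$ gives $|A\theta| = |A| \leq \text{rank}\,\theta$, so $A\theta \in vN^*\Pi_\theta$; on morphisms, $\Delta_\theta(f) = (\theta_{|A})^{-1} f \theta$ is a well-defined mapping $A\theta \to B\theta$, and since $N^*\Pi_\theta$ is a \emph{full} subcategory of $\mathscr{P}(X)$ this is automatically a legitimate morphism there. Functoriality is then a short computation using left-to-right composition: $\Delta_\theta(1_A) = (\theta_{|A})^{-1}\theta_{|A} = 1_{A\theta}$, and for $f\colon A\to B$, $g\colon B\to C$ the middle factor $\theta_{|B}(\theta_{|B})^{-1} = 1_B$ collapses, leaving $\Delta_\theta(f)\Delta_\theta(g) = (\theta_{|A})^{-1}fg\,\theta = \Delta_\theta(fg)$. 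For an inclusion $j(A,B)$ with $A\subseteq B$, chasing $a\theta \in A\theta$ through $(\theta_{|A})^{-1}\,j(A,B)\,\theta$ returns $a\theta$, so $\Delta_\theta(j(A,B))$ is the inclusion $A\theta \hookrightarrow B\theta$, establishing inclusion preservation. Fully faithfulness is immediate from the bijectivity of $\theta_{|A}$ and $\theta_{|B}$, since the assignment $g \mapsto \theta_{|A}\, g\, (\theta_{|B})^{-1}$ is a two-sided inverse to $f \mapsto \Delta_\theta(f)$ on the relevant hom-sets of mappings.

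For the local isomorphism property, fix $A \in v\mathscr{P}_\theta$ and consider $\Delta_\theta|_{\langle A\rangle}$. On objects it sends $\{B : B\subseteq A\}$ to $\{B\theta : B \subseteq A\}$; because $\theta$ is injective on $A$, the map $B \mapsto B\theta$ is an order-preserving bijection of $\{B \subseteq A\}$ onto $\{C : C \subseteq A\theta\} = v\langle A\theta\rangle$ with inverse $C \mapsto C(\theta_{|A})^{-1}$, and each such $B$ indeed lies in $v\mathscr{P}_\theta$ since $\pi_\theta$ still separates $B$. Combined with fully faithfulness, this makes $\Delta_\theta|_{\langle A\rangle}$ an isomorphism of $\langle A\rangle$ onto $\langle \Delta_\theta(A)\rangle$, as required of a local isomorphism.

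The main obstacle, and the step where the arithmetic of $\theta$ genuinely enters, is the $M$-set covering condition: for every $\bar\pi \in v\Pi_\theta$ one must produce $A \in v\mathscr{P}_\theta$ with $\bar\pi \in M\Delta_\theta(A)$. Here I would first identify $M\Delta_\theta(A)$ explicitly. Under the identification of $vN^*\Pi_\theta$ with subsets, the object $\Delta_\theta(A) = A\theta$ is the $H$-functor of an idempotent cone of $\Pi_\theta$ whose image is $A\theta$, so by $MH(\epsilon;-) = M\epsilon$ together with Lemma~\ref{mset} we obtain $M\Delta_\theta(A) = \{\bar\pi \in v\Pi_\theta : A\theta \text{ is a cross-section of } \pi\}$. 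Now let $\bar\pi \in v\Pi_\theta$, so that $\text{Im}\,\theta$ \emph{saturates} $\pi$, i.e.\ every $\pi$-class meets $\text{Im}\,\theta$. Choosing one point of $\text{Im}\,\theta$ from each $\pi$-class produces a cross-section $C \subseteq \text{Im}\,\theta$ of $\pi$, and selecting for each $c \in C$ a preimage $x_c \in c\theta^{-1}$ gives $A = \{x_c : c \in C\}$. Then $A\theta = C$ is a cross-section of $\pi$, while the $c$ being distinct forces the $x_c$ into distinct $\pi_\theta$-classes, so $\pi_\theta$ separates $A$ and $A \in v\mathscr{P}_\theta$. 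Hence $\bar\pi \in M\Delta_\theta(A)$, which completes the covering condition and with it the proof that $\Delta_\theta$ is a cross-connection.
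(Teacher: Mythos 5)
Your proposal is correct and follows essentially the same route as the paper's proof: verify functoriality by the same computation, deduce the local isomorphism property from the injectivity of $\theta_{|A}$ (which is exactly what $\pi_\theta$ separating $A$ provides), and establish the $M$-set covering condition from the saturation of $\pi$ by $\mathrm{Im}\,\theta$ together with Lemma~\ref{mset}. You merely make explicit some steps the paper compresses, such as the full faithfulness via the two-sided inverse $g \mapsto \theta_{|A}\, g\, (\theta_{|B})^{-1}$ and the choice of preimages $x_c$ witnessing that $\pi_\theta$ separates the constructed set $A$.
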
  
\begin{proof}
First, we show that $\Delta_{\theta}$ is well-defined. Suppose $A \in \mathscr{P}_\theta$, then $\Delta_{\theta}(1_A) =(\theta_{|A})^{-1} 1_A \theta = 1_{A\theta}$. Also if $f\colon A\to B$ and $g\colon B \to C$, then 
\begin{equation*}
\begin{split}
\Delta_{\theta}(f\circ g) =&  (\theta_{|A})^{-1} (f\circ g) \theta\\
 =& (\theta_{|A})^{-1} f (1_B) g \theta \\ 
 =& (\theta_{|A})^{-1} f(\theta (\theta_{|B})^{-1}) g \theta\\
 =& ((\theta_{|A})^{-1} f\theta) ((\theta_{|B})^{-1} g \theta)\\
 =& \Delta_{\theta}(f)\circ\Delta_{\theta}(g).
\end{split}
\end{equation*}
Observe that $\mathscr{P}_\theta$ is a full subcategory of $N^*\Pi(X)$. Also for $A \in v\mathscr{P}_\theta$, since $\pi_\theta$ separates $A$, $A\mapsto A\theta$ is a bijection. Since $\pi_\theta$ separates $A$, all the subsets of $A$ will also be separated by $\pi_\theta$. Hence, we can see that the functor $\langle A \rangle \mapsto \langle A\theta \rangle$ is a normal category isomorphism. Thus $\Delta_\theta$ is a local isomorphism. Observe here that, $ A\in v\mathscr{P}_\theta$ are precisely those vertices of $\mathscr{P}(X)$, where $\Delta_\theta$ is a local isomorphism.

Since the category $\Pi_\theta$ consists of partitions which are saturated by Im $\theta$, the subsets of Im $\theta$
will be cross-sections of the partitions in $\Pi_\theta$. So for every $\bar{\pi} \in v\Pi_\theta$, there is some $A \in v\mathscr{P}_\theta$ such that $A\theta$ is a cross-section of $\pi$. Thus by Lemma \ref{mset}, for every $\bar{\pi} \in v\Pi_\theta$, there is some $A \in v\mathscr{P}_\theta$ such that $\bar{\pi} \in M\Delta_\theta(A)$. Hence $\Delta_\theta$ is a cross-connection between the categories $\mathscr{P}_\theta$ and $\Pi_\theta$.
\end{proof}
Observe that the cross-connection $\Delta_\theta$ is a {proper} local isomorphism which is not a category isomorphism. Dually, we can define a functor $\Gamma_\theta$ from $\Pi_\theta$ to $\mathscr{P}_\theta$ as follows. For $\bar{\pi} \in v\Pi_\theta$ and $\eta^\ast\colon \bar{\pi}_{1}\to \bar{\pi}_{2}$ in $\Pi_\theta$, 
\begin{equation*}\label{eqncrossrp}
\Gamma_\theta (\bar{\pi}) \:=\: {\theta}^\ast(\bar{\pi}) \text{ and } \Gamma_\theta(\eta^\ast) = (\theta \eta (\theta_{|C})^{-1})^\ast
\end{equation*}
where $C \in \mathscr{P}_\theta$ is a cross-section of $(\pi_1)\theta^{-1}$. Refer \cite{tx} to see the details of how a transformation $\theta$ induces a morphism ${\theta}^\ast$ in the category of partitions from $\bar{\pi}$ to $\overline{(\pi)\theta^{-1}}$ for $\bar{\pi} \in v\Pi_\theta$. Then it can be shown that, $\Gamma_\theta$ is a local isomorphism  such that for every $ A\in \mathscr{P}_\theta$ there exists $\bar{\pi}\in \Pi_\theta$ such that $A\in\Gamma_\theta (\bar{\pi})$. Hence, we can show that $\Gamma_\theta$ is indeed a {proper} cross-connection.

Now, the cross-connections $\Gamma_\theta$ and $\Delta_\theta$ give rise to two bifunctors $\Gamma_\theta(-,-)\colon  \mathscr{P}_\theta \times \Pi_\theta \to \bf{Set}$ and $\Delta_\theta(-,-) \colon  \mathscr{P}_\theta \times \Pi_\theta \to \bf{Set}$ as follows. For all $(A,\bar{\pi}) \in v\mathscr{P}_\theta \times v\Pi_\theta$ and $(f,\eta^\ast)\colon (A,\bar{\pi}_1) \to (B,\bar{\pi}_2)$ 
\begin{subequations}
\begin{align}
\Gamma_\theta (A,\bar{\pi})\:&=\: \{ a \in \mathscr{T}_X : \text{ Im } a \subseteq A \text{ and } \bar{\pi}_{a} \subseteq \theta^*(\bar{\pi}) \}\label{eqngx1}\\
\Gamma_\theta (f,\eta^\ast)\:& : \: a \mapsto \theta \eta (\theta_{|C})^{-1} a f  \label{eqngx2}\\
\Delta_\theta (A,\bar{\pi})\:&=\: \{ a \in \mathscr{T}_X : \text{ Im } a \subseteq \theta(A) \text{ and } \bar{\pi}_{a} \subseteq \bar{\pi} \}\label{eqndx1}\\
\Delta_\theta (f,\eta^\ast)\:& : \: a \mapsto \eta a (\theta_{|A})^{-1}f \theta \label{eqndx2}
\end{align}
\end{subequations}
It may be observed here that although $\eta$ represents a mapping between partitions $\pi_2$ and $\pi_1$, $\eta$ may be restricted to give a mapping between its cross-sections \cite{pix}. The resulting intermediary semigroups $U\Gamma_\theta$ and $U\Delta_\theta$ may be characterized as follows:
\begin{subequations}
\begin{align}
U\Gamma_\theta =& \{a\in \mathscr{T}_X: \pi_a\supseteq\pi_\theta \text{ and Im }a \in v\mathscr{P}_\theta   \} = \{\theta a : a \in Reg(\mathscr{T}_X^\theta) \}\nonumber\\
U\Delta_\theta =& \{a\in \mathscr{T}_X: \pi_a\in v\Pi_\theta \text{ and Im }a \subseteq \text{ Im }\theta   \} = \{ a\theta : a \in Reg(\mathscr{T}_X^\theta) \}\nonumber
\end{align}
\end{subequations}
Observe that the semigroups $U\Gamma_\theta$ and $U\Delta_\theta$ represent the semigroups of principal cones from the categories $\mathcal{L}$ and $\mathcal{R}$ respectively, of the semigroup $Reg(\mathscr{T}_X^\theta)$. Hence, we see that both the categories here have \emph{non-principal} normal cones. In fact, this is a necessary condition for the existence of a `good' cross-connection structure.

Given cross-connections $\Gamma_\theta$ and $\Delta_\theta$, there exists a natural isomorphism $\chi_{\Gamma_\theta}$ between the bifunctors $\Gamma_\theta(-,-)$ and $\Delta_\theta(-,-)$ associated with the cross-connections. This natural isomorphism called the duality of the semigroup $Reg(\mathscr{T}_X^\theta)$ is described in the next proposition.  
\begin{pro}
The duality $\chi_{\Gamma_\theta}\colon \Gamma_\theta(-,-)\to \Delta_\theta(-,-)$ is given by $\chi_{\Gamma_\theta}(A,\bar{\pi})\colon  \theta a \mapsto a \theta$.
\end{pro}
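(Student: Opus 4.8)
The plan is to prove the proposition by identifying the claimed formula with the duality that already exists by the general theory, rather than by constructing the duality from scratch. Recall from Section \ref{seccxn} that for any regular semigroup the duality $\chi_\Gamma$ is the natural isomorphism between the bifunctors whose linked pairs are exactly the pairs $(\rho^a,\lambda^a)$ of principal cones. Applying this to $S=Reg(\mathscr{T}_X^\theta)$, it therefore suffices to (i) pin down the concrete transformations in $U\Gamma_\theta$ and $U\Delta_\theta$ that represent the principal cones $\rho^a$ and $\lambda^a$, and then (ii) read off the action of $\chi_{\Gamma_\theta}(A,\bar\pi)$ on these representatives.

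For step (i) I would use the formulas for principal cones from Section \ref{seccxn}, remembering that the relevant product is the sandwich product $\ast$. Since $\rho^a(Se)=\rho(e,e\ast a,f)$ and $e\ast a=e\theta a$, the principal cone $\rho^a$ of the variant coincides with the ordinary principal cone determined by $\theta a$; under the isomorphism $T\mathscr{P}_\theta\cong P_1$ this is exactly the element $\theta a\in U\Gamma_\theta$. Dually, $\lambda^a(eS)=\lambda(e,a\ast e,f)$ with $a\ast e=a\theta e$, so $\lambda^a$ is represented by $a\theta\in U\Delta_\theta$ under the anti-isomorphism $T\Pi_\theta\cong P_2^{\mathrm{op}}$. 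These two identifications are precisely what is encoded in the descriptions $U\Gamma_\theta=\{\theta a\}$ and $U\Delta_\theta=\{a\theta\}$ recorded above.

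Step (ii) is then immediate in principle: since $\chi_{\Gamma_\theta}$ links $\rho^a$ to $\lambda^a$, at the component $(A,\bar\pi)$ it must send the representative $\theta a$ of $\rho^a$ to the representative $a\theta$ of $\lambda^a$, which is the asserted formula. As a cross-check I would also verify the naturality square directly against the bifunctor actions (\ref{eqngx2}) and (\ref{eqndx2}): chasing $\theta a\in\Gamma_\theta(A,\bar\pi_1)$ both ways around the square for a morphism $(f,\eta^\ast)\colon(A,\bar\pi_1)\to(B,\bar\pi_2)$ produces $\eta(\theta_{|C})^{-1}\theta a f\theta$ and $\eta a\theta(\theta_{|A})^{-1}f\theta$, and both collapse to $\eta a f\theta$ once one cancels $(\theta_{|C})^{-1}\theta$ and $\theta(\theta_{|A})^{-1}$ as identities on the appropriate subsets (using that $\theta_{|C}$ and $\theta_{|A}$ are injective because $C$ is a cross-section of $(\pi_1)\theta^{-1}$ and $\pi_\theta$ separates $A$).

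The subtle point, and the part I expect to require the most care, is the interpretation of the formula $\theta a\mapsto a\theta$. Because $a\mapsto\rho^a$ need not be injective (two elements of $Reg(\mathscr{T}_X^\theta)$ agreeing on $\mathrm{Im}\,\theta$ give the same cone $\theta a$ but generally different $a\theta$), the assignment $\theta a\mapsto a\theta$ is \emph{not} a single well-defined map on all of $U\Gamma_\theta$; it is well-defined only on each fixed component $\Gamma_\theta(A,\bar\pi)$, where the object $(A,\bar\pi)$ singles out the correct representative $a$. I would therefore take care to argue that the image and kernel constraints of (\ref{eqngx1}) determine, for each element of $\Gamma_\theta(A,\bar\pi)$, a unique value $a\theta$ in $\Delta_\theta(A,\bar\pi)$ — equivalently, that $\chi_{\Gamma_\theta}(A,\bar\pi)$ is the genuine bijection supplied by the general theory — and that it is exactly this per-component bijection that the displayed formula records.
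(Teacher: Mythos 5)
Your proposal is correct, and it reaches the formula by a slightly different route than the paper. The paper's proof is a direct verification: it chases $\theta a$ both ways around the naturality square for $(f,\eta^\ast)$, reduces both composites to $\eta a f\theta$ using exactly the cancellations $(\theta_{|C})^{-1}\theta=1_{C\theta}$ and $\theta(\theta_{|A})^{-1}=1_A$ that you describe, and then asserts that each component map is a bijection. Your ``cross-check'' is therefore precisely the paper's main argument, while your primary derivation --- identifying the variant's principal cones $\rho^a$ and $\lambda^a$ with $\theta a$ and $a\theta$ via $e\ast a=e\theta a$ and $a\ast e=a\theta e$, and then invoking the general fact that the duality links $\rho^a$ to $\lambda^a$ --- is left implicit in the paper (it appears only in the surrounding characterizations of $U\Gamma_\theta$, $U\Delta_\theta$ and of the linked pairs). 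What your approach buys is a cleaner explanation of \emph{why} the formula must be $\theta a\mapsto a\theta$ rather than merely a confirmation that it works. Your well-definedness remark is not pedantry but a genuine improvement on the paper's justification: taken globally on $U\Gamma_\theta$ the assignment $\theta a\mapsto a\theta$ really is ill-defined (for example, with $X=\{1,2,3\}$ and $\theta=(122)$, the elements $a=(121)$ and $b=(122)$ of $Reg(\mathscr{T}_X^\theta)$ satisfy $\theta a=\theta b=(122)$ yet $a\theta=(121)\neq(122)=b\theta$, and the two values are separated into the components $(\{1,2\},\overline{\{\{1,3\}\{2\}\}})$ and $(\{1,2\},\overline{\{\{1\}\{2,3\}\}})$ respectively), so the paper's parenthetical ``since the map $\theta a\mapsto a\theta$ is a bijection'' only makes sense componentwise, exactly as you argue via the constraints in the definition of $\Gamma_\theta(A,\bar{\pi})$ and $\Delta_\theta(A,\bar{\pi})$ (equivalently, via $u=f\ast u$ for $u$ in the relevant sandwiched ideal). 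Carrying out that last point explicitly would make your write-up strictly more complete than the published proof.
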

\begin{proof}
First, we show that $\chi_\Gamma$ is indeed a natural transformation. Let $(f,\eta^\ast) \in \mathscr{P}(X) \times \Pi(X)$ where $f \colon  A \to B$ and $\eta^\ast \colon  \bar{\pi}_1 \to \bar{\pi}_2$. 
\begin{equation*}\label{duality}
\xymatrixcolsep{3pc}\xymatrixrowsep{2.5pc}\xymatrix
{
 \Gamma_{\theta }(A,\bar{\pi}_1) \ar[r]^{\chi_{\Gamma_\theta}(A,\bar{\pi}_1)} \ar[d]_{\Gamma_{\theta }(f,\eta^\ast)}  
 & \Delta_{\theta }(A,\bar{\pi}_1) \ar[d]^{\Delta_{\theta }(f,\eta^\ast)} \\       
 \Gamma_{\theta }(B,\bar{\pi}_2) \ar[r]^{\chi_{\Gamma_\theta}(B,\bar{\pi}_2)} & \Delta_{\theta }(B,\bar{\pi}_2) 
}
\end{equation*}
To see that the above diagram commutes, let $ \theta a \in \Gamma_{\theta }(A,\bar{\pi}_1)$. Then,
\begin{align*}
(\theta a)\chi_{\Gamma_\theta}(A,\bar{\pi}_1)\Delta_{\theta }(f,\eta^\ast)& = (  a \theta ) \Delta_{\theta }(f,\eta^\ast)\\
& =  \eta a \theta (\theta_{|A})^{-1}f \theta \text{ (using (\ref{eqndx2}))}\\
& = \eta  a (1_A) f \theta  \text{ (since }\theta (\theta_{|A})^{-1} = 1_A)\\
& = \eta  a f \theta \text{ (since Im }a \subseteq A). 
\end{align*}
\begin{align*}
\text{Also } (\theta a)\Gamma_{\theta }(f,\eta^\ast)\chi_{\Gamma_\theta}(B,\bar{\pi}_2) &= \theta \eta (\theta_{|C})^{-1} \theta a f \chi_{\Gamma_\theta}(B,\bar{\pi}_2) \text{ (using (\ref{eqngx2}))}\\
 &= \theta (\eta (\theta_{|C})^{-1} \theta a f) \chi_{\Gamma_\theta}(B,\bar{\pi}_2)\\
 &= (\eta (\theta_{|C})^{-1} \theta a f)\theta \\
 &= \eta ((\theta_{|C})^{-1} \theta) a f\theta \\
 &= \eta (1_{C \theta } ) a f\theta  \text{ (since }(\theta_{|C})^{-1} \theta = 1_{C \theta })\\
 &= \eta a f\theta \text{ (since }C\theta \text{ is a cross-section of }\pi_1).
\end{align*}
Thus, we have 
$$(\theta a)\chi_{\Gamma_\theta}(A,\bar{\pi}_1)\Delta_{\theta }(f,\eta^\ast) = (\theta a)\Gamma_{\theta }(f,\eta^\ast)\chi_{\Gamma_\theta}(B,\bar{\pi}_2) \:\: \text{ for every } \: \theta a \in \Gamma_{\theta }(A,\bar{\pi}_1).$$ 

Also observe that since the map $\theta a \mapsto  a\theta$ is a bijection for $a \in Reg(\mathscr{T}_X^\theta)$, the map $\chi_{\Gamma_\theta}(A,\bar{\pi})$ is a bijection of the set $\Gamma_{\theta }(A,\bar{\pi})$ onto the set $\Delta_{\theta }(A,\bar{\pi})$. Thus $\chi_{\Gamma_\theta}$ is a natural isomorphism. 
\end{proof}
  
So, an element $\theta  a \in U\Gamma_\theta$ is linked to the element $a \theta \in U\Delta_\theta$. Hence, the resulting cross-connection semigroup is given by 
$$ \tilde{S}\Gamma_\theta = (\mathscr{P}_\theta,\Pi_\theta; \Delta_\theta) = (\Pi_\theta,\mathscr{P}_\theta; \Gamma_\theta) = \{ (\theta a, a \theta) : a \in  Reg(\mathscr{T}_X^\theta)\} .$$
Summarising, we have the following theorem.
\begin{thm}
The cross-connection semigroup $(\Pi_\theta,\mathscr{P}_\theta; \Gamma_\theta)$ is isomorphic to the semigroup $Reg(\mathscr{T}_X^\theta)$ of regular elements of $\mathscr{T}_X^\theta$ such that the $\mathcal{L}$ category of $Reg(\mathscr{T}_X^\theta)$ is isomorphic to $\mathscr{P}_\theta$ and the $\mathcal{R}$ category of $Reg(\mathscr{T}_X^\theta)$ is isomorphic to $\Pi_\theta$.  
\end{thm}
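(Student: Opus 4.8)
The plan is to produce an explicit isomorphism and then read off the statements about the constituent categories from work already done. I would take the map $\phi\colon Reg(\mathscr{T}_X^\theta)\to \tilde{S}\Gamma_\theta$, $\phi(a)=(\theta a,\, a\theta)$, and show it is a semigroup isomorphism, where $\tilde{S}\Gamma_\theta$ carries the cross-connection product $(\rho^a,\lambda^a)\circ(\rho^b,\lambda^b)=(\rho^a\cdot\rho^b,\,\lambda^b\cdot\lambda^a)$ of Section \ref{seccxn}. Well-definedness (that $(\theta a,a\theta)$ is a genuine \emph{linked} pair) and surjectivity are immediate from the two facts established just above: the computation $\tilde{S}\Gamma_\theta=\{(\theta a,a\theta):a\in Reg(\mathscr{T}_X^\theta)\}$, together with the identifications of $\theta a$ and $a\theta$ as the principal cones $\rho^a\in U\Gamma_\theta$ and $\lambda^a\in U\Delta_\theta$ linked under $\chi_{\Gamma_\theta}$. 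Conceptually the backbone is the general representation theorem of Section \ref{seccxn}, which already guarantees $a\mapsto(\rho^a,\lambda^a)$ is an isomorphism; the only things needing concrete attention are the translation of the abstract product into ordinary transformations and the injectivity of $\phi$.

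For the homomorphism property I would convert the abstract cone products into products of transformations using the two structural isomorphisms already proved: $T\mathscr{P}_\theta\cong P_1$, a semigroup isomorphism under which the cone product becomes ordinary composition in $\mathscr{T}_X$, and $T\Pi_\theta\cong P_2$ via an \emph{anti}-isomorphism. Under these, $\rho^a\cdot\rho^b$ corresponds to $(\theta a)(\theta b)=\theta a\theta b$, while $\lambda^b\cdot\lambda^a$ corresponds to $(a\theta)(b\theta)=a\theta b\theta$, the order reversal being absorbed by the anti-isomorphism. Since the variant product is $a\ast b=a\theta b$, we have $\theta(a\ast b)=\theta a\theta b$ and $(a\ast b)\theta=a\theta b\theta$, so $\phi(a\ast b)=(\theta a\theta b,\,a\theta b\theta)=\phi(a)\circ\phi(b)$.

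Injectivity is the step I expect to be the main obstacle, since neither $a\mapsto\theta a$ nor $a\mapsto a\theta$ is injective on its own, so one must use the pair together with \emph{both} defining conditions of $Reg(\mathscr{T}_X^\theta)=P_1\cap P_2$. I would argue as follows: suppose $\theta a=\theta b$ and $a\theta=b\theta$ for $a,b\in P_1\cap P_2$, and fix $x\in X$. Because $a\in P_2$, $\operatorname{Im}\theta$ saturates $\pi_a$, so the $\pi_a$-class of $x$ contains some $z=w\theta\in\operatorname{Im}\theta$; then $xa=za=w(\theta a)=w(\theta b)=zb$. The relation $a\theta=b\theta$ gives $xa\equiv xb\pmod{\pi_\theta}$, hence $zb=xa\equiv xb\pmod{\pi_\theta}$; now $zb,xb\in\operatorname{Im}b$ and $b\in P_1$ means $\pi_\theta$ separates $\operatorname{Im}b$, forcing $zb=xb$. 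Thus $xa=xb$ for every $x$, i.e.\ $a=b$.

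Finally, the assertions that the $\mathcal{L}$ and $\mathcal{R}$ categories of $Reg(\mathscr{T}_X^\theta)$ are $\mathscr{P}_\theta$ and $\Pi_\theta$ require no new argument: they were identified as such when the categories were constructed, and the general theory of Section \ref{seccxn} guarantees that the cross-connection semigroup recovers precisely its defining categories as its associated $\mathcal{L}$ and $\mathcal{R}$ categories. In short, the homomorphism property is routine once the anti-isomorphism $T\Pi_\theta\cong P_2$ is correctly bookkept in the second coordinate, the category statements are inherited, and the genuine content is the combinatorial injectivity argument above.
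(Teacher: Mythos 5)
Your proposal is correct and takes essentially the same route as the paper: the theorem there is stated as a summary of the preceding identifications ($\rho^a=\theta a$, $\lambda^a=a\theta$, linked via $\chi_{\Gamma_\theta}$) combined with the general representation theorem $a\mapsto(\rho^a,\lambda^a)$ of Section \ref{seccxn}, which is exactly your backbone. Your explicit homomorphism bookkeeping through $T\mathscr{P}_\theta\cong P_1$ and the anti-isomorphism $T\Pi_\theta\cong P_2$, and your combinatorial injectivity argument (saturation from $a\in P_2$ plus separation from $b\in P_1$), are both correct, though they concretely re-verify what the general theory already guarantees for any regular semigroup.
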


Thus we realise $Reg(\mathscr{T}_X^\theta)$ as a subsemigroup of $\mathscr{T}_X \times \mathscr{T}_X$ so that $Reg(\mathscr{T}_X^\theta) = (\Pi_\theta,\mathscr{P}_\theta; \Gamma_\theta)$ is the result of the categories $\mathscr{P}_\theta$ and $\Pi_\theta$ cross-connected via $\theta$. Observe that, the structural results of Dolinka and East \cite{igd} have a natural cross-connection interpretation. For instance, the map 
$$\psi:Reg(\mathscr{T}_X^\theta) \to Reg(\mathscr{T}(X,A))\times Reg(\mathscr{T}(X,\alpha)):a\mapsto(a\theta,\theta a)$$ being injective, translates to the cross-connection functor being a local isomorphism. So, this dual approach may help in expanding their combinatorial results and also extending the structure theorem to a larger class of semigroups.

Further, observe that the Green relations between the elements are the same in $\mathscr{T}_X$ and in $Reg(\mathscr{T}_X^\theta)$. This illustrates the fact that, when the binary operation is changed, the real structural `variance' is the `cross-connection' of these ideals. Thus, $Reg(\mathscr{T}_X^\theta)$ is a classic case where all the subtleties of cross-connection theory play out nicely, reiterating the necessity of such a sophisticated theory in describing the ideal structure of semigroups.

\section{Biorder structure}

Now, we proceed to describe the biorder structure of $Reg(\mathscr{T}_X^\theta)$ using the cross-connection representation. Recall that, Nambooripad, in his seminal work \cite{mem}, characterised the set of idempotents of a (regular) semigroup as a (regular) biordered set. He gave a structural description of a regular semigroup from its (regular) biordered set, using \emph{inductive groupoids}, exploiting the structural information captured by the idempotents of a semigroup. 

A \emph{biordered set} is a partial algebra whose partial binary operation, called the {basic product}, is determined by two preorders $\omega^l$ and $\omega^r$, and satisfying certain axioms. Given two elements $e,f$ in a biordered set, the \emph{sandwich set} $\mathcal{S}(e,f)$ characterises the inverses (or the regularity) of the elements of the semigroup associated with the biordered set. Roughly speaking, the sandwich set is the (regular semigroup) generalisation of the notion of the meet of two elements in a semilattice (of an inverse semigroup). We refer the reader to \cite{mem,higginstech} for definition and properties of the biordered set and sandwich sets.

In \cite[Section V.1]{cross}, Nambooripad gave the description of biordered set and sandwich sets of a cross-connection semigroup. So, using the cross-connection representation of $Reg(\mathscr{T}_X^\theta)$ obtained in the previous section, we can obtain the biorder structure description of the semigroup, in terms of subsets and partitions. We omit proofs and details, as they may affect the self-containedness of the article. This discussion also suggests that we can always retrieve the idempotent structure from a cross-connection description, reiterating the strength of the theory. 

\subsection{Biordered set and sandwich sets of $\mathscr{T}_X$}

First, in the transformation semigroup $\mathscr{T}_X$, the idempotents are given by
\begin{equation}\label{eqb1}
E(\mathscr{T}_X) = \{(A,\bar{\pi}) \in \mathscr{P}(X)\times\Pi(X) : A \text{ is a cross-section of }\pi \}.
\end{equation}
One can see that for each $(A,\bar{\pi}) \in E(\mathscr{T}_X)$, there exists a unique idempotent transformation $e\in \mathscr{T}_X$ such that Im $ e=A$ and $\pi_e=\pi$. Then $e$ is called the \emph{cross-connection idempotent cone} associated with the idempotent $(A,\bar{\pi})$. Then the preorders $\omega^l$ and $\omega^r$ are defined by:
\begin{equation}\label{eqb2}
(A,\bar{\pi})\omega^l(A',\bar{\pi}')\iff A\subseteq A'\text{  and  }(A,\bar{\pi})\omega^r(A',\bar{\pi}')\iff \bar{\pi}\subseteq \bar{\pi}'.
\end{equation}
For any two idempotents $e=(A,\bar{\pi})$ and $e'=(A',\bar{\pi}')$, we define basic products as:
\begin{equation}\label{eqb3}
(A,\bar{\pi})(A',\bar{\pi}')=
\begin{cases}
    (A,\bar{\pi}), & \text{ if } A\subseteq A';\\
    (A',{\text{Im } {(e_{|A'})^*} } ), & \text{ if } A'\subseteq A;\\
    (A',\bar{\pi}'), & \text{ if } \bar{\pi}'\subseteq \bar{\pi};\\
    (\text{Im }e'_{|A},\bar{\pi}),& \text{ if } \bar{\pi}\subseteq \bar{\pi}'.
  \end{cases}
\end{equation}

Then $E(\mathscr{T}_X)$ as defined in (\ref{eqb1}) forms a regular biordered set with preorders and basic products as defined in (\ref{eqb2}) and (\ref{eqb3}) respectively.

Further, the sandwich set $\mathcal{S}((A,\bar{\pi}'),(A',\bar{\pi}))= \mathcal{S}(A,\bar{\pi}) $ of any two idempotents $(A,\bar{\pi}'), (A',\bar{\pi}) \in E(\mathscr{T}_X)$ is given by, 
\begin{multline}
\mathcal{S}(A,\bar{\pi}) = \{ (X,\bar{\sigma})\in  \mathscr{P}(X)\times\Pi(X)  : X\text{ is a cross-section of }\pi\\
\text{ and } A \text{ is a cross-section of } \sigma \}.
\end{multline}

\begin{rmk}
The biordered set and sandwich sets in the singular transformation semigroup $Sing(X)$ can be obtained from the above description by just restricting the categories $ \mathscr{P}(X)$ and $\Pi(X) $, respectively.
\end{rmk}

\subsection{Biordered set and sandwich sets of $Reg(\mathscr{T}_X^\theta)$}

Now, we proceed to give the biorder structure of $Reg(\mathscr{T}_X^\theta)$. Since $Reg(\mathscr{T}_X^\theta) =(\Pi_\theta,\mathscr{P}_\theta ; \Gamma_\theta)$ , the idempotents $E(Reg(\mathscr{T}_X^\theta)) =E_{\Gamma_\theta}$ of the semigroup $Reg(\mathscr{T}_X^\theta)$ may be described as follows:
\begin{equation}
E_{\Gamma_\theta} = \{(A,\bar{\pi}) \in \mathscr{P}_\theta \times \Pi_\theta: \pi_\theta \text{ separates }A \text{ and } A\theta \text{ is a cross-section of }\pi \}.
\end{equation}
Since $\mathscr{P}_\theta \times \Pi_\theta$ is a full subcategory of $\mathscr{P}(X) \times \Pi(X)$, preorders and basic products defined in (\ref{eqb2}) and (\ref{eqb3}) respectively, restricted to $E_{\Gamma_\theta}$, will give preorders and basic products in $E_{\Gamma_\theta}$. Then it may be verified that $E_{\Gamma_\theta}$ forms a regular biordered set, with quasi-orders and basic products, as defined above.

Now, the sandwich set $ \mathcal{S}(A,\bar{\pi}) =\mathcal{S}((A,\bar{\pi}'),(A',\bar{\pi}))$, where $(A,\bar{\pi}'),(A',\bar{\pi}) \in E_{\Gamma_\theta}$ is given by,
\begin{multline}
\mathcal{S}(A,\bar{\pi})  =\{ (X,\bar{\sigma}) \in \mathscr{P}_\theta \times \Pi_\theta: X\text{ is a cross-section of }\pi\\
 \text{ and } A \text{ is a cross-section of } \sigma \}.
\end{multline}

Although the sandwich set description is independent of the cross-connection, the sandwich sets in $Reg(\mathscr{T}_X^\theta)$ will indeed become smaller than in $\mathscr{T}_X$, because of the restriction in the categories. For instance, $(323) \in \mathcal{S}(\{12\},\overline{\{\{13\}\{2\}\}})=\mathcal{S}((121),(121))$ in $\mathscr{T}_3$; but clearly, $(323) \notin \mathcal{S}((121),(121))$ in $Reg(\mathscr{T}_3^{(122)})$.

\section{An example}

We conclude with an illustrative example. Consider the semigroup $Reg(\mathscr{T}_4^{\theta})$ of regular elements of the variant of the finite full transformation semigroup on a four element set, where $\theta =(1233)$. The egg box diagram of $\mathscr{T}_4^{\theta}$ may be found at \cite[Figure 2]{igd}. First, since $\theta=(1233)$, 
\begin{align*}
v\mathscr{P}_\theta=\{\{123\},\{124\}, \{12\},\{23\},\{24\},&\{14\},\{13\},\{1\},\{2\},\{3\},\{4\} \} ;\\
v\Pi_\theta=\{  \overline{\{1\}\{2\}\{34\}}, \overline{\{14\}\{2\}\{3\}}, \overline{\{1\}\{24\}\{3\}}&, \overline{\{124\}\{3\}} ,  \overline{\{12\}\{34\}} ,  \overline{\{1\}\{234\}} ,  \\ 
&\overline{\{14\}\{23\}},  \overline{\{2\}\{134\}} ,  \overline{\{13\}\{24\} }, \overline{\{1234\}} \}.
\end{align*}

\begin{center}
\begin{tikzpicture}[thick,scale=.9, every node/.style={scale=.9}, node distance=0 cm,outer sep = 0pt]
\tikzstyle{grp}=[draw, rectangle,font={\tiny}, text width= .6cm, minimum height=.7cm, minimum width=1cm, fill=violet!20,anchor=south west]
\tikzstyle{grp0}=[draw, rectangle,font={\tiny}, text width= .6cm, minimum height=.7cm, minimum width=1cm, fill=red!20,anchor=south west]
\tikzstyle{dc}=[draw, rectangle, font={\tiny}, text width= .6cm, minimum height=.7cm, minimum width=1cm, fill=violet!0,anchor=south west]
\tikzstyle{grp1}=[draw, rectangle,font={\tiny}, text width= .6cm,  minimum height=.7cm, minimum width=1cm, fill=blue!20,anchor=south west]
\tikzstyle{dc1}=[draw, rectangle,font={\tiny}, text width= .6cm,  minimum height=.7cm, minimum width=1cm, fill=blue!0,anchor=south west]
\tikzstyle{nm}=[font={\tiny}, text width= .6cm, minimum height=.7cm, minimum width=1cm, fill=white!20,anchor=south west]
\tikzstyle{nt}=[draw, rectangle, text width= 4cm, text centered, minimum height=.7cm, minimum width=3.5cm,fill=green!10, anchor=south west]
\tikzstyle{nc}=[font={\Large}, text centered, minimum height=.7cm, minimum width=1cm, anchor=south west]

\node[nm] (s0a) at (7.2,6.5) {$12$};
\node[nm] (s0b) at (8.2,6.5) {$23$};
\node[nm] (s0c) at (9.2,6.5) {$24$};
\node[nm] (s0d) at (10.2,6.5) {$14$};
\node[nm] (s0e) at (11.2,6.5) {$13$};

\node[nm] (s0) at (11.8,6) {$\overline{\{124\}\{3\}}$};
\node[dc] (s01) at (7,6) {(1121) (2212)};
\node[grp0] (s02) [right = of s01] {(2232) (3323)};
\node[grp0] (s03) [right = of s02] {(2242) (4424)};
\node[grp0] (s04) [right = of s03] {(1141) (4414)};
\node[grp0] (s05) [right = of s04] {(1131) (3313)};

\node[nm] (s1) at (11.8,5.3) {$\overline{\{12\}\{34\}}$};
\node[dc] (s11) [below = of s01] {(1122) (2211)};
\node[grp0] (s12) [right = of s11] {(2233) (3322)};
\node[grp0] (s13) [right = of s12] {(2244) (4422)};
\node[grp0] (s14) [right = of s13] {(1144) (4411)};
\node[grp0] (s15) [right = of s14] {(1133) (3311)};

\node[nm] (s2) at (11.8,4.6) {$\overline{\{1\}\{234\}}$};
\node[grp0] (s21) [below = of s11] {(1222) (2111)};
\node[dc] (s22) [right = of s21] {(2333) (3222)};
\node[dc] (s23) [right = of s22] {(2444) (4222)};
\node[grp0] (s24) [right = of s23] {(1444) (4111)};
\node[grp0] (s25) [right = of s24] {(1333) (3111)};

\node[nm] (s3) at (11.8,3.9) {$\overline{\{14\}\{23\}}$};
\node[grp0] (s31) [below = of s21] {(1221) (2112)};
\node[dc] (s32) [right = of s31] {(2332) (3223)};
\node[dc] (s33) [right = of s32] {(2442) (4224)};
\node[grp0] (s34) [right = of s33] {(1441) (4114)};
\node[grp0] (s35) [right = of s34] {(1331) (3113)};

\node[nm] (s4) at (11.8,3.2) {$\overline{\{2\}\{134\}}$};
\node[grp0] (s41) [below = of s31] {(1211) (2122)};
\node[grp0] (s42) [right = of s41] {(2322) (3233)};
\node[grp0] (s43) [right = of s42] {(2422) (4244)};
\node[dc] (s44) [right = of s43] {(1411) (4144)};
\node[dc] (s45) [right = of s44] {(1311) (3133)};

\node[nm] (s5) at (11.8,2.5) {$\overline{\{13\}\{24\}}$};
\node[grp0] (s51) [below = of s41] {(1212) (2121)};
\node[grp0] (s52) [right = of s51] {(2323) (3232)};
\node[grp0] (s53) [right = of s52] {(2424) (4242)};
\node[dc] (s54) [right = of s53] {(1414) (4141)};
\node[dc] (s55) [right = of s54] {(1313) (3131)};

\node[nc] (g) at (5.7,6.1) {$\Gamma_\theta$};

\node[nm] (l0a) at (.65,6) {$12$};
\node[nm] (l0b) at (1.65,6) {$23$};
\node[nm] (l0c) at (2.65,6) {$24$};
\node[nm] (l0d) at (3.65,6) {$14$};
\node[nm] (l0e) at (4.65,6) {$13$};

\node[nm] (l1) at (-.75,5.5) {$\overline{\{12\}\{34\}}$};
\node[dc] (l11) at (.5,5.5) {(1122) (2211)};
\node[grp] (l12) [right = of l11] {(2233) (3322)};
\node[grp] (l13) [right = of l12] {(2244) (4422)};
\node[grp] (l14) [right = of l13] {(1144) (4411)};
\node[grp] (l15) [right = of l14] {(1133) (3311)};

\node[nm] (l2) at (-.75,4.8) {$\overline{\{1\}\{234\}}$};
\node[grp] (l21) [below = of l11] {(1222) (2111)};
\node[dc] (l22) [right = of l21] {(2333) (3222)};
\node[dc] (l23) [right = of l22] {(2444) (4222)};
\node[grp] (l24) [right = of l23] {(1444) (4111)};
\node[grp] (l25) [right = of l24] {(1333) (3111)};

\node[nm] (l4) at (-.75,4.1) {$\overline{\{2\}\{134\}}$};
\node[grp] (l41) [below = of l21] {(1211) (2122)};
\node[grp] (l42) [right = of l41] {(2322) (3233)};
\node[grp] (l43) [right = of l42] {(2422) (4244)};
\node[dc] (l44) [right = of l43] {(1411) (4144)};
\node[dc] (l45) [right = of l44] {(1311) (3133)};

\node[nm] (l0) at (-.75,3.4) {$\overline{\{124\}\{3\}}$};
\node[dc1] (l01) [below = of l41] {(1121) (2212)};
\node[grp1] (l02) [right = of l01] {(2232) (3323)};
\node[dc1] (l03) [right = of l02] {(2242) (4424)};
\node[dc1] (l04) [right = of l03] {(1141) (4414)};
\node[grp1] (l05) [right = of l04] {(1131) (3313)};

\node[nm] (l3) at (-.75,2.7) {$\overline{\{14\}\{23\}}$};
\node[grp1] (l31) [below = of l01] {(1221) (2112)};
\node[dc1] (l32) [right = of l31] {(2332) (3223)};
\node[grp1] (l33) [right = of l32] {(2442) (4224)};
\node[dc1] (l34) [right = of l33] {(1441) (4114)};
\node[grp1] (l35) [right = of l34] {(1331) (3113)};

\node[nm] (l5) at (-.75,2) {$\overline{\{13\}\{24\}}$};
\node[grp1] (l51) [below = of l31] {(1212) (2121)};
\node[grp1] (l52) [right = of l51] {(2323) (3232)};
\node[dc1] (l53) [right = of l52] {(2424) (4242)};
\node[grp1] (l54) [right = of l53] {(1414) (4141)};
\node[dc1] (l55) [right = of l54] {(1313) (3131)};

\node[nm] (l6) at (-.75,1.3) {$\overline{\{123\}\{4\}}$};
\node[dc1] (l61) [below = of l51] {(1112) (2221)};
\node[dc1] (l62) [right = of l61] {(2223) (3332)};
\node[grp1] (l63) [right = of l62] {(2224) (4442)};
\node[grp1] (l64) [right = of l63] {(1114) (4441)};
\node[dc1] (l65) [right = of l64] {(1113) (3331)};

\node[nc] (d) at (7.5,1.2) {$\Delta_\theta$};

\node[nm] (r0a) at (7.2,-4) {$12$};
\node[nm] (r0b) at (8.2,-4) {$23$};
\node[nm] (r0c) at (9.2,-4) {$13$};
\node[nm] (r0d) at (10.2,-4) {$14$};
\node[nm] (r0e) at (11.2,-4) {$24$};
\node[nm] (r0f) at (12.2,-4) {$34$};

\node[nm] (r0) at (5.75,0) {$\overline{\{124\}\{3\}}$};
\node[nm] (r1) at (5.75,-.7) {$\overline{\{12\}\{34\}}$};
\node[nm] (r2) at (5.75,-1.4) {$\overline{\{1\}\{234\}}$};
\node[nm] (r3) at (5.75,-2.1) {$\overline{\{14\}\{23\}}$};
\node[nm] (r4) at (5.75,-2.8) {$\overline{\{2\}\{134\}}$};
\node[nm] (r5) at (5.75,-3.5) {$\overline{\{13\}\{24\}}$};

\node[dc] (r01) at (7,0) {(1121) (2212)};
\node[grp] (r02) [right = of r01] {(2232) (3323)};
\node[grp] (r03) [right = of r02] {(1131) (3313)};
\node[dc1] (r04) [right = of r03] {(1141) (4414)};
\node[dc1] (r05) [right = of r04] {(2242) (4424)};
\node[grp1] (r06) [right = of r05] {(3343) (4434)};

\node[dc] (r11) [below = of r01] {(1122) (2211)};
\node[grp] (r12) [right = of r11] {(2233) (3322)};
\node[grp] (r13) [right = of r12] {(1133) (3311)};
\node[grp1] (r14) [right = of r13] {(1144) (4411)};
\node[grp1] (r15) [right = of r14] {(2244) (4422)};
\node[dc1] (r16) [right = of r15] {(3344) (4433)};

\node[grp] (r21) [below = of r11] {(1222) (2111)};
\node[dc] (r22) [right = of r21] {(2333) (3222)};
\node[grp] (r23) [right = of r22] {(1333) (3111)};
\node[grp1] (r24) [right = of r23] {(1444) (4111)};
\node[dc1] (r25) [right = of r24] {(2444) (4222)};
\node[dc1] (r26) [right = of r25] {(3444) (4333)};

\node[grp] (r31) [below = of r21] {(1221) (2112)};
\node[dc] (r32) [right = of r31] {(2332) (3223)};
\node[grp] (r33) [right = of r32] {(1331) (3113)};
\node[dc1] (r34) [right = of r33] {(1441) (4114)};
\node[grp1] (r35) [right = of r34] {(2442) (4224)};
\node[grp1] (r36) [right = of r35] {(4334) (3443)};

\node[grp] (r41) [below = of r31] {(1211) (2122)};
\node[grp] (r42) [right = of r41] {(2322) (3233)};
\node[dc] (r43) [right = of r42] {(1311) (3133)};
\node[dc1] (r44) [right = of r43] {(1411) (4144)};
\node[grp1] (r45) [right = of r44] {(2422) (4244)};
\node[dc1] (r46) [right = of r45] {(4344) (3433)};

\node[grp] (r51) [below = of r41] {(1212) (2121)};
\node[grp] (r52) [right = of r51] {(2323) (3232)};
\node[dc] (r53) [right = of r52] {(1313) (3131)};
\node[grp1] (r54) [right = of r53] {(1414) (4141)};
\node[dc1] (r55) [right = of r54] {(2424) (4242)};
\node[grp1] (r56) [right = of r55] {(4343) (3434)};

\draw[->][line width=1pt] (s01.west) to (l15.east);
\draw[->][line width=1pt] (s11.west) to (l15.east);
\draw[->][line width=1pt] (s21.west) to (l25.east);
\draw[->][line width=1pt] (s31.west) to (l25.east);
\draw[->][line width=1pt] (s41.west) to (l45.east);
\draw[->][line width=1pt] (s51.west) to (l45.east);

\draw[->][line width=1pt] (s51.south) to  (r01.north);
\draw[->][line width=1pt] (s52.south) to  (r02.north);
\draw[->][line width=1pt] (s53.south) to  (r02.north);
\draw[->][line width=1pt] (s54.south) to  (r03.north);
\draw[->][line width=1pt] (s55.south) to  (r03.north);

\draw[|-|] (.5,0.5) -- node[fill=white,inner sep=1mm,midway] {$T\mathscr{P}_\theta$}(5.5,0.5);
\draw[|-|] (-1.,6.2) -- node[fill=white,inner sep=1mm,midway] {$U\Gamma_\theta$}(-1.,4.1);
\draw[|-|] (5.55,0.7) -- node[fill=white,inner sep=1mm,midway] {$T\Pi_\theta$}(5.55,-3.5);
\draw[|-|] (7,-4.) -- node[fill=white,inner sep=1mm,midway] {$U\Delta_\theta$}(10,-4.);
\node[nt] (note) at (0,-2.3) {$\mathscr{D}^{\theta}$-class of rank two in $Reg(\mathscr{T}_4^{\theta})$, where $\theta=(1233)$.};

\end{tikzpicture}
\end{center}

The above diagram illustrates the cross-conection structure of the regular $\mathscr{D}^\theta$-class of rank two in $Reg(\mathscr{T}_4^{\theta})$. The coloured blocks indicate group $\mathscr{H}$-classes and $\mathscr{H}^\theta$-classes. 

The top right box represents the $\mathscr{D}^\theta$-class of rank two. The columns and rows here represent the $\mathscr{L}^\theta$-classes and $\mathscr{R}^\theta$-classes in $Reg(\mathscr{T}_4^{\theta})$, respectively. Hence, they denote the object sets of the categories $\mathscr{P}_\theta$ and $\Pi_\theta$, respectively.

First, observe that the idempotents in $\mathscr{T}_4$ and  $\mathscr{T}_4^\theta$ need not coincide. For instance, $(2242) \in E(\mathscr{T}_4^{(1233)})$ but $(2242) \notin E(\mathscr{T}_4)$. On the contrary, $(1414) \in E(\mathscr{T}_4)$ but $(1414) \notin E(\mathscr{T}_4^{(1233)})$.

The top left box represents the corresponding the $\mathscr{D}$-class of rank two in the regular semigroup $T\mathscr{P}_\theta$ of normal cones from the category $\mathscr{P}_\theta$. Recall that $T\mathscr{P}_\theta$  is isomorphic to $P_1$, seen as a subsemigroup of $\mathscr{T}_X$. Observe that, only the top three rows of $T\mathscr{P}_\theta$ (violet coloured part in the pdf file) are the principal cones, and they give rise to linked cones. Thus, in this case, the regular semigroup $U\Gamma_\theta \subsetneq T\mathscr{P}_\theta$. 

From the diagram, it is clear how $\Gamma_\theta$ is a local isomorphism from the category $\mathcal{R}(Reg(\mathscr{T}_4^{\theta}))$ to $\mathcal{R}(T\mathscr{P}_\theta)\cong N^*\mathcal{L}(Reg(\mathscr{T}_4^{\theta}))$. It is indeed a proper local isomorphism; for instance, the first two rows ($\mathscr{R}^\theta$-classes) of $Reg(\mathscr{T}_4^{\theta})$ get mapped to the first row ($\mathscr{R}$-class) of $T\mathscr{P}_{\theta}$.

Similarly, the bottom right box represents the corresponding $\mathscr{D}$-class of rank two in the regular semigroup $T\Pi_\theta$ of normal cones from the category $\Pi_\theta$. The semigroup $T\Pi_\theta$  is isomorphic to $P_2$, seen as the subsemigroup of $\mathscr{T}_X^\text{op}$. Here, the first three columns from the left (violet coloured part in the pdf file) are the principal cones in $T\Pi_\theta$, forming the regular semigroup $U\Delta_\theta\subsetneq T\Pi_\theta$, such that $\Delta_\theta$ is the cross-connection functor.

\section*{Acknowledgements}
\noindent The author is grateful to A. R. Rajan, University of Kerala, India for several fruitful discussions during the preparation of this article.\\
The author expresses his deep gratitude to M. V. Volkov, Ural Federal University, for his helpful advice, suggestions and remarks.\\

\bibliographystyle{plain}
\bibliography{variant}

\end{document}